\title[Loomis-Whitney inequalities in $\He^n$]{Loomis-Whitney inequalities in Heisenberg groups}
\author{Katrin F\"assler and Andrea Pinamonti}
\address{Department of Mathematics and Statistics \\ University of Jyv\"askyl\"a, P.O. Box. 35 (MaD), FI-40014 University of Jyv\"askyl\"a \\ Finland}
\address{Department of Mathematics\\
University of Trento,
Via Sommarive 14\\
I-38123 Povo\\
Italy} \email{katrin.s.fassler@jyu.fi}
\email{andrea.pinamonti@unitn.it}
\date{\today}
\subjclass[2010]{28A75 (primary) 52C99, 46E35, 35R03 (secondary)}
\keywords{Radon transform, Loomis-Whitney inequality, Heisenberg
group, Sobolev inequality, isoperimetric inequality}
\thanks{K.F. is supported by the Academy of Finland via the project \emph{Singular integrals, harmonic functions, and boundary regularity in
Heisenberg groups}, grant Nos. 321696, 328846. A.P. is partially
supported by supported by the University of Trento and GNAMPA of
INDAM}
\newcommand{\R}{\mathbb{R}}
\newcommand{\W}{\mathbb{W}}
\newcommand{\He}{\mathbb{H}}
\newcommand{\N}{\mathbb{N}}
\newcommand{\Z}{\mathbb{Z}}
\newcommand{\calH}{\mathcal{H}}
\newcommand{\1}{\mathbf{1}}
\def\Barint_#1{\mathchoice
          {\mathop{\vrule width 6pt height 3 pt depth -2.5pt
                  \kern -8pt \intop}\nolimits_{#1}}%
          {\mathop{\vrule width 5pt height 3 pt depth -2.6pt
                  \kern -6pt \intop}\nolimits_{#1}}%
          {\mathop{\vrule width 5pt height 3 pt depth -2.6pt
                  \kern -6pt \intop}\nolimits_{#1}}%
          {\mathop{\vrule width 5pt height 3 pt depth -2.6pt
                  \kern -6pt \intop}\nolimits_{#1}}}
\numberwithin{equation}{section}
\theoremstyle{plain}
\newtheorem{thm}[equation]{Theorem}
\newtheorem{lemma}[equation]{Lemma}
\newtheorem{ex}[equation]{Example}
\theoremstyle{definition}
\newtheorem{definition}[equation]{Definition}
\theoremstyle{remark}
\newtheorem{remark}[equation]{Remark}
\newcommand{\nref}[1]{(\hyperref[#1]{#1})}
\begin{document}
\begin{abstract} This note concerns \emph{Loomis-Whitney inequalities} in Heisenberg groups $\He^n$:
\begin{displaymath} |K| \lesssim \prod_{j=1}^{2n}|\pi_j(K)|^{\frac{n+1}{n(2n+1)}}, \qquad K \subset \He^n. \end{displaymath}
Here $\pi_{j}$, $j=1,\ldots,2n$, are the \emph{vertical Heisenberg
projections} to the hyperplanes $\{x_j=0\}$, respectively, and
$|\cdot|$ refers to a natural Haar measure on either $\He^n$, or
one of the hyperplanes. The Loomis-Whitney inequality in the first
Heisenberg group $\mathbb{H}^1$ is a direct consequence of known
$L^p$ improving properties of the standard Radon transform in
$\mathbb{R}^2$. In this note, we show how the Loomis-Whitney
inequalities in higher dimensional Heisenberg groups can be
deduced by an elementary inductive argument from the inequality in
$\mathbb{H}^1$. The same approach, combined with multilinear
interpolation, also yields the following strong type bound:
\begin{displaymath}
\int_{\mathbb{H}^n} \prod_{j=1}^{2n} f_j(\pi_j(p))\;dp\lesssim
\prod_{j=1}^{2n} \|f_j\|_{\frac{n(2n+1)}{n+1}}
\end{displaymath}
for all nonnegative measurable functions $f_1,\ldots,f_{2n}$ on
$\mathbb{R}^{2n}$. These inequalities and their geometric
corollaries are thus ultimately based on planar geometry. Among
the applications of Loomis-Whitney inequalities in $\mathbb{H}^n$,
we mention the following sharper version of the classical
geometric Sobolev inequality in $\mathbb{H}^n$:
\begin{displaymath}
\|u\|_{\frac{2n+2}{2n+1}} \lesssim
\prod_{j=1}^{2n}\|X_ju\|^{\frac{1}{2n}}, \qquad u \in BV(\He^n),
\end{displaymath} where $X_j$, $j=1,\ldots,2n$, are the standard
horizontal vector fields in $\He^n$. Finally, we also establish an
extension of the Loomis-Whitney inequality in $\mathbb{H}^n$,
where the Heisenberg vertical coordinate projections
$\pi_1,\ldots,\pi_{2n}$ are replaced by more general families of
mappings that allow us to apply the same inductive approach based
on the $L^{3/2}$-$L^3$ boundedness of an operator in the plane.
 \end{abstract}

\maketitle


\section{Introduction}

The \emph{Loomis-Whitney} inequality in $\R^{d}$ bounds the volume
of a set $K \subset \R^{d}$ by the areas of its coordinate
projections:
\begin{equation}\label{LWIneq} |K| \leq \prod_{j = 1}^{d} |\tilde \pi_{j}(K)|^{\frac{1}{d - 1}}, \end{equation}
where $\tilde \pi_{j}(x_{1},\ldots,x_{d}) = (x_{1},\ldots,x_{j -
1},x_{j + 1},\ldots,x_{d})$. Here $|A|$ refers to $k$-dimensional
Lebesgue outer measure in $\R^{k}$ whenever $A \subset \R^{k}$.
The inequality \eqref{LWIneq} is due to Loomis and Whitney
\cite{MR0031538} from 1949. It is trivial for $d=2$ and follows by
induction, using H\"older's inequalities, for $d>2$. The
Loomis--Whitney inequality is one of the fundamental inequalities
in geometry and has been studied intensively; we refer to
\cite{Benn, Bob, Salani, Finner,  MR3300318} and references
therein for a historical account and some applications of the
Loomis-Whitney inequality.

The present note discusses analogues of \eqref{LWIneq} in
Heisenberg groups $\mathbb{H}^n$. It arose as a complement to
manuscript \cite{fassler2020planar} with Tuomas Orponen, in which
we reduced the proof of the Loomis-Whitney inequality for
$\mathbb{H}^1$ to an incidence geometric problem in the plane that
we resolved using the method of polynomial partitioning. Later we
learned that the Loomis-Whitney inequality in the first Heisenberg
group -- and inequalities of similar type -- had already been
obtained earlier \cite{MR667786,MR1945281,MR2576685,MR4176542} by
a Fourier-analytic approach or the so-called method of
refinements, albeit not phrased in terms of Heisenberg
projections. In addition to acknowledging previous work, the aim
of the present note is to show how the Loomis-Whitney inequality
in $\mathbb{H}^n$ for $n>1$ can be proven by induction, similarly
as the original inequality \cite{MR0031538}, but now using the
version in $\mathbb{H}^1$ as a base case. Alternatively, one could
apply the method of refinements also for $n>1$, see the related
comment in \cite[Section 4]{MR2895344}. The inductive approach in
the present note has the advantage of easily yielding certain
strong-type endpoint inequalities, see Theorem
\ref{mainIntroStrong}, which are not covered by \cite{MR2895344}
or other literature we are aware of. For applications to geometric
Sobolev and isoperimetric inequalities in $\mathbb{H}^n$, the
weak-type inequalities would however be sufficient.

\textbf{Acknowledgements.} This paper would not have been written
without our previous project with Tuomas Orponen on the subject of
the Loomis-Whitney inequality in $\mathbb{H}^1$. Parts of the
introduction and Section \ref{s:Appl} draw heavily from
\cite{fassler2020planar}. We thank Tuomas for the past
collaboration as well as for valuable suggestions that helped to
improve the exposition of the present paper.

\subsection{Heisenberg groups}\label{ss:Heis}
The \emph{$n$-th Heisenberg group} $\mathbb{H}^n$ is the group
$(\mathbb{R}^{2n+1},\cdot)$ with
\begin{equation}\label{eq:GroupProd} (x,t) \cdot (x',t') := \Big(x + x',  t + t' +
\tfrac{1}{2}\sum_{j=1}^n x_j x_{n+j}'-x_{n+j}x_j'\Big),
\end{equation}
which makes it a nilpotent Lie group of step $2$. Here, $(x,t)$
denotes a point in $\mathbb{R}^{2n+1}$ with
$x=(x_1,\ldots,x_{2n})\in \mathbb{R}^{2n}$ and $t\in \mathbb{R}$.
For $x\in \mathbb{R}^{2n}$ and $k\in \{1,\ldots,2n\}$, we will use
the symbol $\hat x_k$ to denote either the point in
$\mathbb{R}^{2n}$ that is obtained by replacing the $k$-th
coordinate of $x$ with $0$, or the point in $\mathbb{R}^{2n-1}$
that is obtained by simply deleting the $k$-th coordinate of $x$.
The meaning should always be clear from the context.

In geometric measure theory of the sub-Riemannian Heisenberg group
\cite{MR3587666}, an important role is played by \emph{Heisenberg
projections}
 that are adapted to the group and dilation structure of $\mathbb{H}^n$
 and that map onto homogeneous subgroups of $\mathbb{H}^n$.
We only consider  projections associated to the "coordinate"
 hyperplanes containing the $t$-axis, so we limit our discussion
 to those. Let $\W_{j}\subset \He^n$, $j=1,\ldots, 2n$, be the
\emph{($1$-codimensional) vertical subgroups} of $\He^n$ given by
the hyperplanes $\{(x,t)\in\mathbb{R}^{2n+1}:\,x_j=0\}$,
respectively. Write $$\mathbb{L}_{j} :=
\{(0,\ldots,0,x_j,0,\ldots,0) : x_j \in \R\}$$ for the span of the
$j$-th standard basis vector. So $\mathbb{L}_j$ is a
\emph{complementary ($1$-dimensional) horizontal subgroup} of
$\W_{j}$. This means, for example, that every point $p \in \He^n$
has a unique decomposition $p = w_{j} \cdot l_{j}$, where $w_{j}
\in \W_{j}$ and $l_{j} \in \mathbb{L}_{j}$.  These decompositions
give rise to the \emph{vertical coordinate projections}
\begin{displaymath} p \mapsto w_{j} =: \pi_{j}(p) \in \W_{j},\quad j=1,\ldots,2n. \end{displaymath}
Using the group product in \eqref{eq:GroupProd}, it is  easy to
write down explicit expressions for $\pi_{j}$:
\begin{equation}\label{eq:ProjForm}
\pi_{j}(x,t) = (\hat x_j,t + \tfrac{x_j x_{n+j}}{2}) \quad
\text{and} \quad \pi_{n+j}(x,t) = (\hat x_{n+j},t - \tfrac{x_j
x_{n+j}}{2}),\quad j=1,\ldots,n.
\end{equation}
Readers who are not comfortable with the Heisenberg group can
simply identify  $\W_{j}$ with $\R^{2n}$, and consider the maps
\begin{displaymath}
(x,t)\mapsto (x_1,\ldots,x_{j-1},x_{j+1},\ldots,x_{2n},t+
\tfrac{x_j x_{n+j}}{2}),\quad\text{for }j=1,\ldots,n,
\end{displaymath}
and their analogs for $j=n+1,\ldots,2n$,
 without paying attention to their origin. It is clear that the
 projections
$\pi_{1},\ldots,\pi_{2n}$ are smooth, and hence locally Lipschitz
with respect to the Euclidean metric in $\R^{2n+1}$, and they
satisfy
\begin{equation}\label{eq:RankCondition}
\det\left( D \pi_j(p) D \pi_j(p)^t\right)\geq 1,\quad
j=1,\ldots,2n,\;p\in \mathbb{R}^{2n+1}.
\end{equation}
 Vertical projections are, in fact, \textbf{not} Lipschitz
with respect to the \emph{Kor\'anyi distance} $d(p,q) = \|q^{-1}
\cdot p\|$ on $\mathbb{H}^n$. Nonetheless they  play a significant
role in the geometric measure theory of Heisenberg groups --
 as do orthogonal projections in $\R^{d}$ --
 so they have been actively investigated in recent years,
 see \cite{MR3047423,MR2955184,MR3992573,MR3495435,2018arXiv181112559H,2020arXiv200204789H}.
 The vertical projections are non-linear maps,
 but their \emph{fibres} $\pi_{j}^{-1}\{w\}$ are nevertheless lines.
 In fact, the fibres of $\pi_{j}$ are precisely the left translates of the line $\mathbb{L}_j$,
 that is, $\pi_{j}^{-1}\{w\} = w \cdot \mathbb{L}_j$ for $w \in \W_j$.

For subsets of $\He^n \cong \R^{2n+1}$, the notation $|\cdot|$
will refer to Lebesgue (outer) measure on $\R^{2n+1}$, and for
subsets of a vertical plane $\R^{2n} \cong \W_j \subset \He^n$,
the notation $|\cdot|$ will refer to Lebesgue (outer) measure in
$\R^{2n}$. Up to multiplicative constants, they could also be
defined as the $(2n+2)$- and $(2n+1)$-dimensional Hausdorff
measures, respectively, relative to the Kor\'anyi metric on
$\He^n$. So, our measures coincide with canonical "intrinsic"
objects in $\He^n$. All integrations on $\He^n$ or $\W_j$ will be
performed with respect to Lebesgue measures.

\subsection{Loomis-Whitney inequalities in $\He^n$ and their generalizations}
We can now state a variant of the Loomis-Whitney inequality
\eqref{LWIneq} for subsets of $\He^n$ in terms of the vertical
coordinate projections $\pi_{j}$. In $\R^{d}$, the inequality
makes a reference to the $d$ orthogonal coordinate projections
$\widetilde{\pi}_1,\ldots,\widetilde{\pi}_d$. These are, now, best
viewed as the projections whose fibres are translates of lines
parallel to the coordinate axes. In $\He^n$, we consider instead
the vertical projections $\pi_{j}$ whose fibres are left
translates of $\mathbb{L}_{j}$, $j=1,\ldots,2n$; the precise
formulae were stated in \eqref{eq:ProjForm}.
 With this notation, the following
variant of the Loomis-Whitney inequality holds:
\begin{thm}[Loomis-Whitney inequality in $\mathbb{H}^n$]\label{mainIntro2}
Fix $n\in \mathbb{N}$. Let $K \subset \R^{2n+1}$ (or $K \subset
\He^n$) be an arbitrary set. Then
\begin{equation}\label{form18} |K| \lesssim \prod_{j=1}^{2n} |\pi_{j}(K)|^{\frac{n+1}{n(2n+1)}}. \end{equation}
\end{thm}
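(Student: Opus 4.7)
The strategy is induction on $n$. The base case $n = 1$ is the Loomis-Whitney inequality in $\He^1$ alluded to in the introduction, proved either via $L^p$-improving properties of the standard planar Radon transform or by the polynomial partitioning argument of \cite{fassler2020planar}. Assuming \eqref{form18} holds in $\He^{n-1}$ with exponent $\alpha := n/((n-1)(2n-1))$ on each of the $2n-2$ projection factors, I plan to reduce the $\He^n$ case to this by slicing along one conjugate pair of horizontal coordinates and combining the inductive estimate with H\"older's inequality.

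Fix, say, the conjugate pair $(x_n, x_{2n})$ (any other pair would work by symmetry) and, for $(a,b) \in \R^2$, define the slice
$$K_{a,b} := \{(y,t) \in \R^{2n-1} : (y_1,\ldots,y_{n-1},a,y_n,\ldots,y_{2n-2},b,t) \in K\}.$$
Under the natural identification of $\R^{2n-1}$ with $\He^{n-1}$ in these coordinates, the formulas in \eqref{eq:ProjForm} show that the $\He^{n-1}$-vertical projections of $K_{a,b}$ coincide with the $(a,b)$-slices of $\pi_j(K)$ for each $j \in J := \{1,\ldots,2n\} \setminus \{n, 2n\}$, since only the pair $(x_k,x_{n+k})$ with $k = n$ is eliminated and the remaining $j$-th projection formulas do not mix the frozen coordinates with the surviving ones in any problematic way.

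First I would apply Fubini and the induction hypothesis on each slice, obtaining
$$|K| = \int_{\R^2} |K_{a,b}|\,da\,db \lesssim \int_{\R^2} \prod_{j \in J} g_j(a,b)^{\alpha}\,da\,db, \qquad g_j(a,b) := |\pi_j(K)_{a,b}|,$$
where Fubini also gives $\int g_j = |\pi_j(K)|$. Next I would apply a generalized H\"older inequality in $(a,b)$ whose exponents are tuned so that, after integration, each factor reduces to an integral of $g_j$ to a power matching the target $(n+1)/(n(2n+1))$. A single H\"older step cannot convert the integrand to a product of $L^1$-norms because $(2n-2)\alpha = 2n/(2n-1) > 1$; what it does produce are $L^{2n/(2n-1)}$-type norms of the slice functions $g_j$. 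These are then bounded by a further appeal to the $\He^1$ base case, applied now to slices of $\pi_j(K)$ itself (i.e. to the two-dimensional fibers visible to $\pi_n$ and $\pi_{2n}$). This second application is precisely what injects the two missing factors $|\pi_n(K)|$ and $|\pi_{2n}(K)|$ into the estimate, each with the correct exponent.

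The main obstacle is the algebraic bookkeeping of the exponents. One must verify that the composition of (i) the inductive exponent $n/((n-1)(2n-1))$, (ii) the weights used in H\"older, and (iii) the planar base-case exponent $2/3$ collapses to exactly $(n+1)/(n(2n+1))$ on every $|\pi_j(K)|$. Two consistency checks simplify this task: the symmetry under permutation of conjugate pairs forces the final exponent to be the same for each $j \in \{1,\ldots,2n\}$, and Heisenberg dilations $\delta_r(x,t) = (rx, r^2 t)$, under which $|K|$ scales by $r^{2n+2}$ and each $|\pi_j(K)|$ by $r^{2n+1}$, pin down the sum of exponents to be $2(n+1)/(2n+1)$. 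Together these leave only one free parameter, so once the Hölder step is set up consistently the exponent identity is forced, closing the induction and yielding \eqref{form18}.
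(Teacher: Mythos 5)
Your overall strategy --- induction from $\He^{n-1}$ via slicing --- is in the right spirit, and your first three steps are sound: the Fubini decomposition $|K| = \int_{\R^2}|K_{a,b}|\,da\,db$, the identification of the $\He^{n-1}$-projections of $K_{a,b}$ with the $(a,b)$-slices of $\pi_j(K)$ for $j \in J$, and the application of the inductive hypothesis on each slice. The scaling and symmetry checks you point out are also correct.

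The genuine gap is in the step you describe as ``a further appeal to the $\He^1$ base case, applied now to slices of $\pi_j(K)$ itself.'' After the H\"older step, you hold a bound of the form $|K| \lesssim \prod_{j\in J}\|g_j\|_{2n/(2n-1)}^{\alpha}$, where $g_j(a,b) = |\pi_j(K)_{a,b}|$. Each factor $\|g_j\|_{2n/(2n-1)}$ is a functional of $\pi_j(K)$ alone; the sets $\pi_n(K)$ and $\pi_{2n}(K)$ appear nowhere in it, and no inequality of the sort needed --- something like $\|g_j\|_{2n/(2n-1)} \lesssim |\pi_j(K)|^{\beta}|\pi_n(K)|^{\gamma}|\pi_{2n}(K)|^{\gamma}$ --- holds, because one can easily make $\|g_j\|_{2n/(2n-1)}$ large while keeping $|\pi_n(K)|, |\pi_{2n}(K)|$ small. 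The $\He^1$ base case is a statement about sets in $\R^3$ and their two Heisenberg projections; it says nothing about a Lebesgue norm of the slice function $g_j$ living on $\R^2$. If instead one tries to inject the missing factors earlier, by interpolating $|K_{a,b}|$ against the inclusions $|K_{a,b}| \leq |\pi_n(K)_{x_{2n}=b}|$ and $|K_{a,b}| \leq |\pi_{2n}(K)_{x_n=a}|$ before integrating, the H\"older exponents do not close: already for $n=2$, requiring each final exponent to be $3/10$ forces the weights in the splitting of $|K_{a,b}|$ to sum to something other than $1$.

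The underlying obstruction is that your slicing is a straight Euclidean Fubini in $(x_n, x_{2n})$, which does not see the Heisenberg twist $t \mapsto t \pm \tfrac{1}{2}x_n x_{2n}$ built into $\pi_n$ and $\pi_{2n}$. The paper's proof (via Theorem~\ref{t:induction theorem}) handles precisely this by performing changes of variables $t \mapsto \tau = t - \tfrac{1}{2}x_n x_{2n}$ (and later $\tau \mapsto \tau + x_n x_{2n}$) that \emph{untwist} those two projections before any H\"older step, so that $f_{2n}\circ\pi_{2n}$ and $f_n\circ\pi_n$ become genuinely independent of $x_{2n}$ and $x_n$ respectively and can be peeled off by H\"older and Minkowski; only then is the $\He^{n-1}$ induction hypothesis applied to the remaining $2n-2$ factors. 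This yields, for each conjugate pair $k$, an \emph{asymmetric} inequality $|K| \lesssim |\pi_k(K)|^{2/(2n+1)}|\pi_{n+k}(K)|^{2/(2n+1)}\prod_{j\neq k}|\pi_j(K)|^{1/(2n+1)}$. The symmetric bound \eqref{form18} then follows by multiplying the $n$ asymmetric inequalities over $k=1,\ldots,n$ and taking the $n$-th root --- a final symmetrization step that is also missing from your sketch. If you want to salvage your outline, the key fix is to replace the straight slicing by the variable changes above, and to record the asymmetric conclusion and symmetrize at the end.
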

Here and in the following, the symbol $\lesssim$ indicates that
the inequality holds up to a positive and finite multiplicative
constant on the right-hand side. We only have to prove the
inequality for Lebesgue measurable sets $K\subset
\mathbb{R}^{2n+1}$. In the general case, we simply pick
$G_{\delta}$-sets $K_j\subset \mathbb{R}^{2n}$ with $K_j\supseteq
\pi_j(K)$ and $|K_j|= |\pi_j(K)|$ for $j=1,\ldots, 2n$, assuming
that the right-hand side of \eqref{form18}  is finite. Then
$K':=\bigcap_{j=1}^{2n} \pi_j^{-1}(K_j)$ is a Lebesgue measurable
subset of $\mathbb{R}^{2n+1}$ that contains $K$ and it suffices to
apply the Loomis-Whitney inequality to $K'$.

So we consider only Lebesgue measurable sets $K$ in the following.
By the inner regularity of the Lebesgue measure, Theorem
\ref{mainIntro2} is then equivalent to the validity of
 \eqref{form18} for all \emph{compact} sets $K\subset
\mathbb{R}^{2n+1}$. Since every such set satisfies $ \chi_K(p)\leq
\prod_{j=1}^{2n}\chi_{\pi_j(K)}(\pi_j(p))$, for all
$p\in\mathbb{R}^{2n+1}$, and on the other hand,
$\bigcap_{j=1}^{2n} \pi_j^{-1}(K_j)$ is compact in
$\mathbb{R}^{2n+1}$ whenever $K_1,\ldots,K_{2n}$ are compact
subsets of $\mathbb{R}^{2n}$, Theorem \ref{mainIntro2} is
equivalent to the statement that
\begin{equation}\label{form18-weak} \int_{\mathbb{R}^{2n+1}}
\prod_{j=1}^{2n}\chi_{K_j}(\pi_j(p))\;dp \lesssim
\prod_{j=1}^{2n}|K_j|^{\frac{n+1}{n(2n+1)}} \end{equation} holds
for all compact sets $K_1,\ldots,K_{2n}\subset \mathbb{R}^{2n}$.
Here we have identified, for $j=1,\ldots,2n$, the
$\{x_j=0\}$-plane in $\mathbb{R}^{2n+1}$ with $\mathbb{R}^{2n}$,
so that $\pi_1,\ldots,\pi_{2n}$ are now mappings from
$\mathbb{R}^{2n+1}$ to $\mathbb{R}^{2n}$. Using this expression,
it is evident that Theorem \ref{mainIntro2} follows from the next
result:

\begin{thm}\label{mainIntroStrong}
 Fix $n\in \mathbb{N}$. Then
\begin{equation}\label{form18Strong} \int_{\mathbb{R}^{2n+1}} \prod_{j=1}^{2n} f_j(\pi_j(p))\,dp \lesssim \prod_{j=1}^{2n}
\|f_j\|_{\frac{n(2n+1)}{n+1}},
\end{equation}
for all nonnegative Lebesgue measurable functions
$f_1,\ldots,f_{2n}$ on $\mathbb{R}^{2n}$.
\end{thm}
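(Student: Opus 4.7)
The plan is to prove Theorem~\ref{mainIntroStrong} by induction on $n$. The base case $n=1$ is the strong-type $\mathbb{H}^1$ Loomis--Whitney inequality recalled in the introduction, which is equivalent to the $L^{3/2}$--$L^3$ boundedness of the planar Radon-type transform and therefore classical. For the inductive step from $\mathbb{H}^{n-1}$ to $\mathbb{H}^n$, I would combine slicing, H\"older's inequality, and multilinear interpolation.

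Fix $n\geq 2$, assume the theorem for $\mathbb{H}^{n-1}$ at exponent $p_{n-1}=(n-1)(2n-1)/n$, and pick any $k\in\{1,\ldots,n\}$. Slicing $\mathbb{R}^{2n+1}$ by the conjugate pair $(x_k,x_{n+k})=(a,b)$ exhibits on each slice --- with coordinates $z=(\hat x_k,\hat x_{n+k},t)\in\mathbb{R}^{2n-1}$ --- an $\mathbb{H}^{n-1}$-structure under which the maps $\pi_j$ with $j\notin\{k,n+k\}$ act as the corresponding $\mathbb{H}^{n-1}$-projections of $z$, while $\pi_k$ and $\pi_{n+k}$ become plain (non-fibered) functions of $z$. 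Applying H\"older's inequality with exponents satisfying $1/r+1/s_k+1/s_{n+k}=1$ to separate the projection-type factors from the two plain ones, and invoking the inductive hypothesis on the former (after raising them to the $r$-th power), produces a slice estimate of the schematic form
\[
\int_{\mathbb{R}^{2n-1}}\prod_{j=1}^{2n}f_j(\pi_j(p))\,dz\ \lesssim\ \prod_{j\neq k,\,n+k}\|f_j^{a,b}\|_{rp_{n-1}}\,\|g_k^{a,b}\|_{s_k}\|g_{n+k}^{a,b}\|_{s_{n+k}},
\]
where $f_j^{a,b},g_k^{a,b},g_{n+k}^{a,b}$ denote the natural slice restrictions. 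Integrating over $(a,b)\in\mathbb{R}^2$ via iterated H\"older --- with Minkowski's integral inequality used to swap mixed-norm orders --- yields, for each $k$ and each admissible choice of inner exponents, a multilinear bound $T(f_1,\ldots,f_{2n})\lesssim\prod_j\|f_j\|_{q_j}$ for some exponent tuple $(q_1,\ldots,q_{2n})$ determined by the parameters.

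These tuples will not in general coincide with the symmetric target $(p_n,\ldots,p_n)$, $p_n=n(2n+1)/(n+1)$: the slicing asymmetrically privileges the pair $(k,n+k)$. The concluding step is \emph{multilinear interpolation}. Letting $k$ range over $\{1,\ldots,n\}$ and the H\"older parameters vary, one obtains a family of multilinear inequalities whose reciprocal exponent tuples $(1/q_1,\ldots,1/q_{2n})$ have the symmetric point $(1/p_n,\ldots,1/p_n)$ in their convex hull --- this is consistent with (indeed forced by) the Heisenberg dilation scaling, which pins down $p_n$ as the unique scale-critical symmetric exponent. A standard multilinear Riesz--Thorin or Marcinkiewicz-type interpolation theorem then delivers \eqref{form18Strong}.

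The principal obstacle is bookkeeping. The functions $g_k^{a,b},g_{n+k}^{a,b}$ depend on the slice parameters degenerately: for instance $\|g_k^{a,b}\|_{L^{s}_z}$ typically depends on $(a,b)$ only through one of the two coordinates, because the other enters only as a translation of $t$ that is invisible to an $L^{s}$-norm in $z$. Consequently, a naive H\"older in $(a,b)$ gives divergent $L^p(\mathbb{R}^2_{a,b})$-norms. Iterating H\"older one coordinate at a time and interleaving Minkowski's integral inequality circumvents this at the price of introducing mixed-norm exponents; verifying that the resulting family of exponent simplices covers the symmetric scale-critical point is the technical heart of the argument, after which the interpolation step is routine.
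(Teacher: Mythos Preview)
Your outline is correct and can be completed, but the organization differs from the paper's. The paper does not induct on the symmetric inequality \eqref{form18Strong}; it first proves by induction (Theorem~\ref{t:induction theorem}) a family of $n$ asymmetric ``vertex'' estimates with exponents $(2n+1)/2$ on $f_k,f_{n+k}$ and $2n+1$ on the remaining factors, taking as induction hypothesis the level-$(n-1)$ vertex inequality itself, and applies multilinear interpolation only once at the very end. Your scheme---assume the symmetric bound at level $n-1$, fix $(x_k,x_{n+k})=(a,b)$, H\"older on the slice, apply the symmetric inductive hypothesis, then H\"older in $a$ and in $b$---also produces $n$ vertex inequalities, but with \emph{different} exponents: taking $s_k=s_{n+k}=2n+1$ and $r=(2n+1)/(2n-1)$ (so $rp_{n-1}=(2n+1)(n-1)/n$), and matching the outer H\"older exponents to these, one gets $q_k=q_{n+k}=2n+1$ and $q_j=(2n+1)(n-1)/n$ for $j\neq k,n+k$; with these specific choices Minkowski is not actually needed. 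One checks $\sum_j 1/q_j=(2n+2)/(2n+1)$ and that the barycenter of the reciprocal tuples over $k=1,\ldots,n$ is $1/p_n$, so interpolation closes the induction. The paper's route has the minor bonus that its vertex inequalities immediately yield the weak-type Loomis--Whitney bound (Theorem~\ref{mainIntro2}) by taking a geometric mean, bypassing interpolation entirely; your route entangles interpolation with the induction but, once the exponent choices above are fixed, makes each inductive step arguably cleaner than the paper's change-of-variables and Minkowski manipulations.
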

The coarea formula coupled with \eqref{eq:RankCondition} shows
that the preimages of Lebesgue null sets in $\mathbb{R}^{2n}$
under $\pi_j$ are Lebesgue null sets in $\mathbb{R}^{2n+1}$, and
so $f_j \circ \pi_j:\mathbb{R}^{2n+1}\to [0,+\infty]$ is Lebesgue
measurable under the assumptions of the theorem, and the integral
on the left-hand side of \eqref{form18Strong} makes sense.

The bilinear case ($n=1$) of Theorem \ref{mainIntroStrong} follows
directly from  the $L^{3/2}-L^3$ boundedness of the standard Radon
transform in $\mathbb{R}^2$, and as such was known -- by a
Fourier-analytic proof -- at least since the work of Oberlin and
Stein \cite{MR667786}; see Section \ref{s:H1}. Theorem
\ref{mainIntroStrong} for $n=1$ is also an instance of
\cite[Theorem 1.1]{MR4176542} (with $b=(2,2)$ in
\cite[(1.6)]{MR4176542}  and $(p_1,p_2)=(3/2,3/2)$ in
\cite[(1.8)]{MR4176542}). The corresponding weak-type bound
(Theorem \ref{mainIntro2} for $n=1$) was also obtained by Gressman
as a special case of the endpoint restricted weak-type estimates
in \cite[Theorem 2]{MR2576685}. Due to the nilpotent group
structure of the Heisenberg group and the invariance of the
problem under Heisenberg dilations, it is a particularly simple
instance of Gressman's more general theorem. The proofs in
\cite{MR2576685,MR4176542} used an adaptation of the \emph{method
of refinements}, which was initiated by Christ \cite{MR1654767} in
order to prove $L^p-L^q$ bounds for certain convolution-type
operators.

To the best of our knowledge, Theorem \ref{mainIntroStrong} for
$n>1$ has not appeared in the literature before.  Stovall proved
in \cite{MR2895344} similar  inequalities for multilinear
Radon-like transforms, but \eqref{form18Strong} for $n>1$
constitutes a strong-type endpoint case that is not covered by her
work. In her notation, our setting corresponds to
 $b(p)=((n+1)/n,\ldots,(n+1)/n)$, which is a point on the boundary
 of the polytope $P$ mentioned in \cite[Theorem 3]{MR2895344}.

Our approach to Theorem \ref{mainIntroStrong} can be applied to
prove something a bit more general, see Theorem
\ref{mainIntroStrong_nonperp} for the precise statement. The idea
is to apply the same inductive procedure and reduce the claim to
an $L^{3/2}$-$L^3$ boundedness statement for a certain operator in
the plane. In the case of  Theorem \ref{mainIntroStrong}, this
operator happens to be the standard Radon transform, but other
choices are possible as well, for instance convolution by a fixed
parabola in $\mathbb{R}^2$, cf.\ the use of \eqref{eq:ConvParab}
in connection with Example \ref{ex:operators}.

\medskip

It is easy to see that the exponents in the Heisenberg
Loomis-Whitney inequality \eqref{form18} are sharp by considering
boxes of the form $[-r,r]^{2n} \times [-r^{2},r^{2}]$. Besides the
difference in the definition of the projections $\tilde \pi_j$ and
$\pi_j$, there is another obvious difference between (the case $d
= 2n+1$ of) the standard Loomis-Whitney inequality \eqref{LWIneq},
and \eqref{form18}: the former bounds the volume of $K$ in terms
of $2n+1$ projections, and the latter in terms of only $2n$
projections. One might therefore ask: is there a version of
\eqref{LWIneq} for $2n$ orthogonal projections $\R^{2n+1} \to
\R^{2n}$ -- and does it look like \eqref{form18}? The answer is
negative. This is a very special case of \cite[Theorem
1.13]{MR2377493} (cf.\ also \cite{MR1726701,MR1969206,MR2895344}),
but perhaps it is illustrative to see an explicit computation for
$n=1$:
\begin{ex} Consider the two standard orthogonal coordinate projections $\tilde{\pi}_{1},\tilde{\pi}_{2}$ in $\R^{3}$
to the $x_2t$- and $x_1t$-planes. If $K = [0,1]^{2} \times
[0,\delta]$, then $|K| = \delta$, and also $|\tilde{\pi}_{1}(K)| =
\delta = |\tilde{\pi}_{2}(K)|$. So, for $\delta > 0$ small, an
inequality of the form
\begin{equation}\label{form19} |K| \lesssim |\tilde{\pi}_{1}(K)|^{\lambda} \cdot |\tilde{\pi}_{2}(K)|^{\lambda} \end{equation}
can only hold for $\lambda \leq \tfrac{1}{2}$.
On the other hand, if $K_{R} = [0,R]^{3}$, with $R \gg 1$, then $|K_{R}| = R^{3}$
and $|\tilde{\pi}_{1}(K_{R})| = R^{2} = |\tilde{\pi}_{2}(K_{R})|$,
so \eqref{form19} can only hold for $\lambda \geq \tfrac{3}{4}$.
The latter example naturally does not contradict \eqref{form18}: note that $|\pi_{j}(K_{R})| \sim R^{3}$ for $R \gg 1$. \end{ex}

\subsection{Gagliardo-Nirenberg-Sobolev inequalities in $\He^n$}

In $\R^{d}$, it is well-known that the Loomis-Whitney inequality
implies the \emph{Gagliardo-Nirenberg-Sobolev inequality}
\begin{equation}\label{GSN} \|f\|_{d/(d - 1)} \leq \prod_{j = 1}^{d} \|\partial_{j}f\|_{1}^{1/d}, \qquad f \in C^{1}_{c}(\R^{d}). \end{equation}
Similarly,  an $\He^n$-analogue of \eqref{GSN} can be obtained as
a corollary of Theorem \ref{mainIntro2}:
\begin{thm}\label{mainIntro3} Let $f \in BV(\He)$. Then,
\begin{equation}\label{GSNHe} \|f\|_{\frac{2n+2}{2n+1}} \lesssim \prod_{j=1}^{2n}\|X_jf\|^{\frac{1}{2n}}. \end{equation}
\end{thm}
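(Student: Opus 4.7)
The plan is to derive \eqref{GSNHe} from Theorem \ref{mainIntro2} by the classical two-step scheme: first prove the corresponding isoperimetric inequality
\begin{equation*}
|E|^{(2n+1)/(2n+2)} \lesssim \prod_{j=1}^{2n} \|X_j \chi_E\|^{1/(2n)}, \qquad E \subset \He^n \text{ measurable},
\end{equation*}
and then bootstrap it to \eqref{GSNHe} via layer cake decomposition and coarea.

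The geometric core of the first step is the observation that each fibre $\pi_j^{-1}\{w\} = w \cdot \mathbb{L}_j$ is precisely the integral curve of the left invariant horizontal vector field $X_j$ through $w$. Equivalently, the measure preserving global coordinate map $\Phi_j \colon \W_j \times \R \to \He^n$, $\Phi_j(w,s) := w \cdot (s e_j, 0)$ (with the obvious modification for $j > n$), conjugates $X_j$ to $\partial_s$. Consequently Fubini, combined with the trivial one-dimensional bound $\|\chi_A'\|_{\mathcal{M}(\R)} \geq 2$ for any nonempty $A \subset \R$ of finite measure, yields
\begin{equation*}
|\pi_j(E)| \leq \tfrac{1}{2}\|X_j \chi_E\|, \qquad j = 1, \ldots, 2n.
\end{equation*}
Feeding this estimate into the Heisenberg Loomis-Whitney inequality \eqref{form18} and raising to the power $(2n+1)/(2n+2)$ produces the isoperimetric inequality, the exponents matching exactly because $\tfrac{n+1}{n(2n+1)} \cdot \tfrac{2n+1}{2n+2} = \tfrac{1}{2n}$.

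For the bootstrap, I would reduce to $f \geq 0$ using $\|X_j |f|\| \leq \|X_j f\|$ and apply layer cake $f = \int_0^\infty \chi_{E_s}\, ds$ with $E_s = \{f > s\}$. Minkowski's integral inequality at the exponent $p := (2n+2)/(2n+1)$ gives $\|f\|_p \leq \int_0^\infty |E_s|^{1/p}\, ds$; bounding each $|E_s|^{1/p}$ via the isoperimetric inequality and then invoking the generalised Hölder inequality with $2n$ equal exponents to move the product outside the integral produces
\begin{equation*}
\|f\|_p \lesssim \prod_{j=1}^{2n} \left(\int_0^\infty \|X_j \chi_{E_s}\|\, ds\right)^{1/(2n)}.
\end{equation*}
Each factor in parentheses equals $\|X_j f\|$ by the componentwise $BV$ coarea formula in $\He^n$, which in turn reduces via the coordinates $\Phi_j$ and Fubini to the classical one-dimensional Fleming-Rishel identity.

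The main technical point is to justify rigorously, for general $f \in BV(\He^n)$ rather than $f \in C^\infty_c(\He^n)$, both the componentwise coarea formula and the reduction $\|X_j |f|\| \leq \|X_j f\|$. Each statement reduces via $\Phi_j$ and Fubini for partial BV functions to its one dimensional counterpart; alternatively, one can first establish \eqref{GSNHe} for $f \in C^\infty_c(\He^n)$ and then extend to $BV(\He^n)$ by the standard approximation-by-smooth-functions theorem in the sense of strict convergence. In any case, no new multilinear harmonic analysis is involved; the whole proof reposes on Theorem \ref{mainIntro2} and standard one dimensional BV facts.
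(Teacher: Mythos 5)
Your argument is correct in outline, but it takes a genuinely different route from the paper. The paper proves Theorem \ref{mainIntro3} directly by a \emph{dyadic} decomposition: it sets $F_k = \{2^{k-1}\le |f|\le 2^k\}$ and, in Lemma \ref{l:sobolev}, establishes the quantitative projection bound $|\pi_j(F_k)| \le 2^{-k+2}\int_{F_{k-1}}|X_j f|$ for $f\in C^1_c$ by integrating $X_j f$ along the fibres $w\cdot\mathbb{L}_j$. It then feeds this into \eqref{form18}, applies generalized H\"older, and finishes with the embedding $\ell^1\hookrightarrow\ell^{(2n+2)/(2n+1)}$. You instead pass through the isoperimetric inequality $|E|^{(2n+1)/(2n+2)}\lesssim\prod_j\|X_j\chi_E\|^{1/(2n)}$ (using the bound $|\pi_j(E)|\lesssim\|X_j\chi_E\|$, whose geometric core — that the fibres of $\pi_j$ are integral curves of $X_j$ — is the same as in Lemma \ref{l:sobolev}), and then bootstrap by continuous layer cake, Minkowski, H\"older, and the componentwise coarea identity $\|X_j f\|=\int_0^\infty\|X_j\chi_{\{f>s\}}\|\,ds$. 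Your scheme is conceptually cleaner and yields the isoperimetric inequality as an explicit intermediate step, but it relies on the componentwise Fleming--Rishel/coarea formula for $BV(\He^n)$ (or its one-dimensional version applied fibrewise via $\Phi_j$), a nontrivial slicing fact that the paper's dyadic argument deliberately sidesteps: Lemma \ref{l:sobolev} only needs the fundamental theorem of calculus along fibres for $C^1$ functions, making the paper's proof more self-contained. Two small cautions in your write-up: the raw bound $|\pi_j(E)|\le\tfrac12\|X_j\chi_E\|$ can fail for a bad representative $E$ of a set of finite perimeter (modifying $E$ on a null set can blow up the projection without changing $\|X_j\chi_E\|$), so you should, as you note in your final paragraph, either work with the superlevel sets of a $C^\infty_c$ function (which by Sard's theorem are a.e.\ smooth and open) or replace $E$ by a canonical representative; and the one-dimensional inequality should require $|A|>0$ rather than merely $A\ne\emptyset$.
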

Here
\begin{equation}\label{eq:Xj}
X_j = \partial_{x_j} - \tfrac{x_{n+j}}{2}\partial_{t} \quad
\text{and} \quad X_{n+j} = \partial_{x_{n+j}} +
\tfrac{x_j}{2}\partial_{t},\quad (j=1,\ldots,n),
\end{equation} are the standard left-invariant "horizontal"
vector fields in $\He^n$, and $BV(\He^n)$ refers to functions $f
\in L^{1}(\He^n)$ whose distributional $X_j$ derivatives are
signed Radon measures with finite total variation, denoted
$\|\cdot\|$.

Theorem \ref{mainIntro3} presents a sharper version of the
well-known "geometric" Sobolev inequality
\begin{equation}\label{sobolev} \|f\|_{\frac{2n+2}{2n+1}} \lesssim \|\nabla_{\He}f\|, \qquad f \in BV(\He^n), \end{equation}
proven by Pansu \cite{MR676380} for $n=1$ as a corollary of the
isoperimetric inequality in $\He^1$. Here $\nabla_{\He}f =
(X_1f,\ldots,X_{2n}f)$. Versions of geometric Sobolev inequalities
and isoperimetric inequalities were obtained in $\mathbb{H}^n$ and
even more general frameworks by several authors, for instance in
\cite{MR1312686,MR1404326}. A proof of \eqref{sobolev} for $n=1$,
using the fundamental solution of the sub-Laplace operator
$\bigtriangleup_{\He}$, is discussed in \cite[Section
5.3]{MR2312336}, following the approach of \cite{MR1312686}. On
the other hand, Theorem \ref{mainIntro3} can be derived from
Theorem \ref{mainIntro2}. This deduction follows a standard
argument, but we present it here to highlight the fact that the
geometric Sobolev and isoperimetric inequalities in all Heisenberg
groups are ultimately based on planar geometry and they can be
deduced from boundedness properties of the Radon transform in
$\mathbb{R}^2$.

Theorem \ref{mainIntroStrong} is related to \emph{Brascamp-Lieb
inequalities}. We direct the reader to e.g.
\cite{2018arXiv181111052B,MR2377493,MR412366} and the references
therein. Euclidean Loomis-Whitney and Brascamp-Lieb inequalities
can be proven by the technique of \emph{heat flow monotonicity},
see \cite{MR2377493}. The same approach has been attempted in
Carnot groups by Bramati \cite{Bramati}, but there seems to be a
gap in the argument, which has been confirmed with the author.
More precisely, the exponents appearing in the proof of
\cite[Theorem 3.2.3]{Bramati} have not been chosen consistently.
It remains an open problem to see whether the Loomis-Whitney
inequalities in Carnot groups can be obtained by the heat flow
approach.

\medskip

\textbf{Structure of the paper.} In Section \ref{s:H1}, we explain
how Theorems \ref{mainIntro2} and \ref{mainIntroStrong} for $n=1$
follow from known $L^p$ improving properties of the Radon
transform in $\mathbb{R}^2$. In Section \ref{s:Hn}, we deduce
Theorems \ref{mainIntro2} and \ref{mainIntroStrong} for arbitrary
$n>1$ by induction from the corresponding inequalities in
$\mathbb{H}^1$. In Section \ref{s:Appl}, we show how to derive the
Gagliardo-Nirenberg-Sobolev inequality, Theorem \ref{mainIntro3},
as an application of the Loomis-Whitney inequality in
$\mathbb{H}^n$. Finally, in Section \ref{s:Generalized} we explain
how to adapt the approach from Section \ref{s:Hn} to prove the
generalized Loomis-Whitney-type inequality stated in Theorem
\ref{mainIntroStrong_nonperp}.


\section{Inequalities in the first Heisenberg group}\label{s:H1}

In this section, we review the proof for the Loomis-Whitney
inequality in the \emph{first} Heisenberg group. For this purpose
is more convenient to use slightly different notation. In
particular, points in $\mathbb{R}^3$ will be denoted by $(x,y,t)$
(instead of $(x,t)=(x_1,x_2,t))$. The group product of
$\mathbb{H}^1$ then reads in coordinates as follows:
\begin{equation}\label{eq:GroupProd1} (x,y,t) \cdot (x',y',t') := (x + x', y + y', t + t' + \tfrac{1}{2}(xy' - yx')). \end{equation}
The vertical Heisenberg projections to the $yt$- and the
$xt$-plane, respectively, are explicitly given by
\begin{displaymath} \pi_{1}(x,y,t) = (0,y,t + \tfrac{xy}{2}) \quad \text{and} \quad \pi_{2}(x,y,t) = (x,0,t - \tfrac{xy}{2}). \end{displaymath}
We recall the statement of Theorems \ref{mainIntro2} and
\ref{mainIntroStrong} for $n=1$:

\begin{thm}[Loomis-Whitney inequality in $\mathbb{H}^1$]\label{main} Let $K \subset \He^1$ be arbitrary. Then,
\begin{equation}\label{form1} |K| \lesssim |\pi_{1}(K)|^{2/3} \cdot |\pi_{2}(K)|^{2/3}. \end{equation}
\end{thm}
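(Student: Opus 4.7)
The plan is to reduce Theorem \ref{main} to the classical $L^{3/2}$-$L^3$ estimate of Oberlin and Stein \cite{MR667786} for the Radon transform in the plane. First, I would observe that it suffices to establish the stronger bilinear inequality
\begin{equation*}
I(f_1, f_2) := \int_{\mathbb{R}^3} f_1(\pi_1(p)) f_2(\pi_2(p)) \, dp \lesssim \|f_1\|_{3/2} \|f_2\|_{3/2}
\end{equation*}
for all nonnegative measurable $f_1, f_2 \colon \mathbb{R}^2 \to [0, +\infty]$, where we identify $\W_j$ with $\mathbb{R}^2$. Indeed, inserting $f_j = \chi_{\pi_j(K)}$ and using $\chi_K(p) \leq f_1(\pi_1(p)) f_2(\pi_2(p))$ immediately recovers \eqref{form1} after noting that $\|f_j\|_{3/2} = |\pi_j(K)|^{2/3}$.

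Using the formulas for $\pi_1, \pi_2$ given earlier, $I(f_1, f_2)$ becomes
\begin{equation*}
\int_{\mathbb{R}^3} f_1\bigl(y, t + \tfrac{xy}{2}\bigr) \, f_2\bigl(x, t - \tfrac{xy}{2}\bigr) \, dx\, dy\, dt.
\end{equation*}
For each fixed $x \in \mathbb{R}$, I would apply the shear $(y, t) \mapsto (w, v)$ with $w := y$ and $v := t - xy/2$, which has unit Jacobian and satisfies $t + xy/2 = v + xw$ as well as $t - xy/2 = v$. After this change of variables and Fubini,
\begin{equation*}
I(f_1, f_2) = \int_{\mathbb{R}^2} f_2(x, v) \, Tf_1(x, v) \, dx\, dv, \qquad Tf_1(x, v) := \int_{\mathbb{R}} f_1(w, v + xw) \, dw.
\end{equation*}
The quantity $Tf_1(x, v)$ is the integral of $f_1$ along the non-vertical line in $\mathbb{R}^2$ of slope $x$ and $v$-intercept $v$, so $T$ is the planar Radon transform in the slope-intercept parameterization of lines. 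H\"older's inequality with the exponent pair $(3/2, 3)$ then yields $I(f_1, f_2) \leq \|f_2\|_{3/2} \, \|Tf_1\|_3$, and everything reduces to the bound
\begin{equation*}
\|Tf\|_{L^3(\mathbb{R}^2)} \lesssim \|f\|_{L^{3/2}(\mathbb{R}^2)}.
\end{equation*}

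This last estimate is precisely the Oberlin-Stein theorem in a specific parameterization of the line space. The only technical point is that \cite{MR667786} is usually stated in the rotation-invariant $(\omega, s) \in S^1 \times \mathbb{R}$ parameterization, whereas ours omits vertical lines and carries a different Jacobian factor; the two formulations are, however, equivalent up to a multiplicative constant via a routine change of variables, and a dimensional/scaling check confirms that both sides of the displayed bound scale identically under affine dilations of $f$, leaving no genuine obstacle. The conceptual heart of the argument is thus the shear that converts the Heisenberg bilinear form into a pairing against the Radon transform; once this identification is made, the remainder is a single application of H\"older's inequality plus an invocation of \cite{MR667786}, and this planar input is precisely what will drive the inductive proof in Section \ref{s:Hn}.
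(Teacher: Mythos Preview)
Your argument is correct and matches the paper's approach essentially line for line: the paper also reduces Theorem \ref{main} to the bilinear estimate (Theorem \ref{mainStrong}), applies the same unit-Jacobian shear (packaged there as the diffeomorphism $\Phi(x,y,t)=(x,y,t+\tfrac{1}{2}xy)$) to rewrite $I(f_1,f_2)$ as $\int Tf_1\cdot f_2$, and then uses H\"older together with $\|Tf\|_3\lesssim\|f\|_{3/2}$, the latter being deduced from Oberlin--Stein by exactly the change-of-parameterization you allude to (spelled out in Theorem \ref{t:T_n=1_bdd}).
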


\begin{thm}\label{mainStrong} For all nonnegative Lebesgue
measurable functions $f_1$ and $f_2$ on $\mathbb{R}^2$ it holds
that
\begin{equation}\label{form1Strong}
\int_{\mathbb{R}^3} f_1(\pi_1(p)) f_2(\pi_2(p))\,dp \lesssim
\|f_1\|_{\frac{3}{2}} \|f_2\|_{\frac{3}{2}}.
 \end{equation}
\end{thm}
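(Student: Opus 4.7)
The plan is to reduce the trilinear integral on the left of \eqref{form1Strong} to a bilinear pairing against a Radon-like transform in the plane, then invoke the classical $L^{3/2}$-$L^3$ bound for the standard Radon transform in $\mathbb{R}^2$ (due to Oberlin-Stein \cite{MR667786}), to which the paper itself alludes.

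First I would perform the change of variable $\sigma = t - xy/2$ for fixed $x,y$, giving $t+xy/2 = \sigma+xy$ and
\begin{equation*}
\int_{\mathbb{R}^3} f_1(\pi_1(p))f_2(\pi_2(p))\,dp = \iiint_{\mathbb{R}^3} f_1(y,\sigma+xy)\,f_2(x,\sigma)\,dx\,dy\,d\sigma.
\end{equation*}
By Fubini, this equals $\iint (Tf_1)(x,\sigma) f_2(x,\sigma)\,dx\,d\sigma$, where
\begin{equation*}
(Tf_1)(x,\sigma) := \int_{\mathbb{R}} f_1(y,\sigma+xy)\,dy
\end{equation*}
is the integral of $f_1$ over the affine line $\{(y,\sigma+xy) : y \in \mathbb{R}\} \subset \mathbb{R}^2$, parametrized by $y$. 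Hölder's inequality with dual exponents $3$ and $3/2$ applied to the $(x,\sigma)$-integration yields $I \leq \|Tf_1\|_{L^3(dx\,d\sigma)}\|f_2\|_{3/2}$, so the theorem reduces to the claim $\|Tf_1\|_{L^3(dx\,d\sigma)} \lesssim \|f_1\|_{3/2}$.

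Next I would identify $T$ with a reparametrization of the standard Radon transform $R$ in $\mathbb{R}^2$. The line $\{(y,\sigma+xy)\}$ has unit normal $(-x,1)/\sqrt{1+x^2}$ and signed distance $\sigma/\sqrt{1+x^2}$ from the origin; setting $(\cos\phi,\sin\phi) = (-x,1)/\sqrt{1+x^2}$ with $\phi \in (0,\pi)$, and $s = \sigma\sin\phi$, one checks that $x = -\cot\phi$ and that the arc-length versus $y$-parametrization yields $Tf_1(x,\sigma) = \sin\phi \cdot Rf_1(\phi,s)$. A direct Jacobian computation gives $dx\,d\sigma = d\phi\,ds/\sin^3\phi$, so the factor $\sin^3\phi$ coming from $|Tf_1|^3$ cancels exactly with the $\sin^{-3}\phi$ in the change of measure, producing the clean identity
\begin{equation*}
\|Tf_1\|_{L^3(dx\,d\sigma)} = \|Rf_1\|_{L^3((0,\pi)\times\mathbb{R})}.
\end{equation*}
Applying the Oberlin-Stein estimate $\|Rf_1\|_{L^3} \lesssim \|f_1\|_{3/2}$ then closes the proof.

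The only nontrivial point is the cancellation of weights in the last step: since $T$ integrates with respect to a non-arc-length parameter that degenerates as $|x|\to\infty$ (i.e., $\phi \to 0,\pi$), one initially fears an unbounded weight $\sec^k\phi$. The $L^3$-exponent is, however, exactly the one that makes the factor $\sin\phi$ from $T = \sin\phi\cdot R$ cube to match the Jacobian $\sin^{-3}\phi$. This precise matching---which is another incarnation of the scaling/Heisenberg-dilation invariance of the bilinear form noted by the authors---is what allows a bare invocation of the planar Radon transform bound, with no truncation or summation over scales required.
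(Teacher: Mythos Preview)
Your proof is correct and follows essentially the same route as the paper: the same change of variable $\sigma=t-\tfrac{xy}{2}$ (the paper writes it via the diffeomorphism $\Phi(x,y,t)=(x,0,t)\cdot(0,y,0)$), the same operator $Tf(x,\sigma)=\int f(y,\sigma+xy)\,dy$, the same H\"older pairing, and the same identification of $T$ with a reparametrized Radon transform using the unit normal $(-x,1)/\sqrt{1+x^2}$. The paper only records the inequality $\|Tf\|_3\le\|Rf\|_{L^3(S^1\times\mathbb{R})}$, whereas your cancellation-of-weights computation gives the slightly sharper identity $\|Tf\|_3=\|Rf\|_{L^3((0,\pi)\times\mathbb{R})}$; and the paper includes an explicit Fatou/approximation step to pass from $\mathcal{C}^\infty_c$ to general nonnegative measurable $f_1,f_2$, which you may want to mention for completeness.
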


On the left-hand side of \eqref{form1}, the notation "$|\cdot|$"
refers to Lebesgue outer measure on $\R^{3}$. Similarly, on the
right-hand side of \eqref{form1}, the notation "$|\cdot|$" refers
to Lebesgue outer measure on $\R^{2}$. Clearly, Theorem
\ref{mainStrong} implies  Theorem \ref{main}. We now explain how
Theorem \ref{mainStrong} itself follows directly from known
$L^p$-improving properties of the standard Radon transform in the
plane $\mathbb{R}^2$.

\medskip

Let $S^1$ be the unit sphere in $\mathbb{R}^2$.
{For a smooth, compactly supported function $f$ on $\mathbb{R}^2$,
the \emph{Radon transform} (or \emph{$X$-ray transform}) $Rf$ is
defined by}
\begin{equation}\label{eq:DefRf}
Rf(\sigma,s):= \int_{\langle z,\sigma\rangle = s} f(z)\,dz,\quad
(\sigma,s)\in S^1\times \mathbb{R}.
\end{equation}
Here $dz$ is the $1$-dimensional Lebesgue measure on the line
$\{z\in \mathbb{R}^2:\, \langle z,\sigma\rangle = s\}$.  Using
Fourier analysis (notably Plancherel's theorem) and complex
interpolation, Oberlin and Stein \cite{MR667786} proved that $R$
extends to a bounded operator from $L^{3/2}(\mathbb{R}^2)$ to
$L^3(S^1 \times \mathbb{R})$. Their result is more general, but
this is the only information one needs to deduce Theorem
\ref{mainStrong}.

\medskip

The connection between inequality \eqref{form1Strong}  and the
Radon transform is illustrated by the formula
\begin{equation}\label{eq:formula}
\int_{\mathbb{R}^3} f_1(\pi_1(p))f_2(\pi_2(p))\;dp =
\int_{\mathbb{R}^2}R\left(f_1\right)(\sigma(x),s_{x,t})f_2(x,t)\,\frac{d(x,t)}{\sqrt{1+x^2}}
\end{equation}
 with
$s_{x,t}= t/\sqrt{1+x^2}$ and $\sigma(x):=
\frac{1}{\sqrt{1+x^2}}(-x,1)$ for smooth compactly supported
functions $f_1$ and $f_2$ on $\mathbb{R}^2$. The proof of
inequality \eqref{form1Strong} using the result in \cite{MR667786}
is an instance of a more general phenomenon that relates
$L^p$-improving properties of averaging operators along curves to
inequalities of the form \eqref{form1Strong} with two factors in
the integral. The general framework is explained in detail in
\cite[9.5. Double fibration formulation]{MR1726701} and
\cite[Section 1]{MR1969206}. For our purpose it is convenient to
work with a linear operator $T$ that yields functions on
$\mathbb{R}^2$, rather than $S^1\times \mathbb{R}$ as in the case
of the Radon transform, so instead of applying directly
\eqref{eq:formula}, we will pass via an identity of the form
\begin{equation*}
\int_{\mathbb{R}^3} f_1(\pi_1(p))f_2(\pi_2(p))\;dp =
\int_{\mathbb{R}^2}Tf_1(x,t)f_2(x,t)\,d(x,t);
\end{equation*}
see the proof of Theorem \ref{mainStrong}. For smooth, compactly
supported functions $f$ on $\mathbb{R}^2$, we define
\begin{equation}\label{eq:defT_for_n=1}
Tf(x,t):= \int_{\mathbb{R}} f(y,t+xy)\,dy,\quad (x,t)\in
\mathbb{R}^2.
\end{equation}
The next statement follows immediately from \cite{MR667786} by
relating the operator $T$ to the Radon transform $R$, and we do
not claim any novelty for it.
\begin{thm}\label{t:T_n=1_bdd} There exists a constant $C$
such that the operator $T$ defined in \eqref{eq:defT_for_n=1}
satisfies
\begin{displaymath}
\|Tf\|_{3}\leq C \|f\|_{\frac{3}{2}}
\end{displaymath}
for all smooth, compactly supported functions $f$.
\end{thm}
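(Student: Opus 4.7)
The plan is to relate $T$ to the classical Radon transform $R$ in the plane by an explicit change of variables, and then invoke the Oberlin--Stein $L^{3/2}\to L^3$ bound for $R$.

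First, I would observe that for each $(x,t)\in\mathbb{R}^2$ the line $\{(y,t+xy):y\in\mathbb{R}\}$ is precisely $\{z\in\mathbb{R}^2:\langle z,\sigma(x)\rangle=s_{x,t}\}$, with $\sigma(x)=(-x,1)/\sqrt{1+x^2}$ and $s_{x,t}=t/\sqrt{1+x^2}$, exactly as in \eqref{eq:formula}. Since $y\mapsto(y,t+xy)$ parametrizes this line with speed $\sqrt{1+x^2}$, the definition \eqref{eq:DefRf} of $R$ gives the pointwise identity
\[
Tf(x,t)\;=\;\frac{1}{\sqrt{1+x^2}}\,Rf\bigl(\sigma(x),s_{x,t}\bigr).
\]

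Second, I would change variables on $\mathbb{R}^2$ to $(\theta,s)\in(0,\pi)\times\mathbb{R}$ by setting $\sigma(x)=(\cos\theta,\sin\theta)$, which forces $x=-\cot\theta$, and $s=s_{x,t}$. A direct computation gives $dx=(1+x^2)\,d\theta$ and, at fixed $x$, $dt=\sqrt{1+x^2}\,ds$, so that $dx\,dt=(1+x^2)^{3/2}\,d\theta\,ds$. Cubing the pointwise identity and integrating, the factor $(1+x^2)^{-3/2}$ cancels exactly against the Jacobian, yielding
\[
\|Tf\|_{L^3(\mathbb{R}^2)}^3 \;=\; \int_0^{\pi}\!\!\int_{\mathbb{R}} |Rf(\sigma,s)|^3\,ds\,d\theta \;=\; \tfrac{1}{2}\,\|Rf\|_{L^3(S^1\times\mathbb{R})}^3,
\]
where the last equality uses the symmetry $Rf(-\sigma,-s)=Rf(\sigma,s)$ to extend the integration from the upper half of $S^1$ to the full circle. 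An application of the Oberlin--Stein bound then gives $\|Tf\|_3\lesssim\|f\|_{3/2}$, as desired.

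Because $R$ carries the analytic content, there is no real analytic obstacle; the only point worth flagging is the algebraic coincidence that the cube of the factor $(1+x^2)^{-1/2}$ compensates the Jacobian $(1+x^2)^{3/2}$ exactly. This reflects the scaling-compatibility that singles out the pair $(3/2,3)$ as the correct endpoint, and it is precisely this pair that will feed the inductive argument in Section~\ref{s:Hn}.
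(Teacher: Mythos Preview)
Your proof is correct and follows essentially the same route as the paper: both establish the pointwise identity $Tf(x,t)=(1+x^2)^{-1/2}Rf(\sigma(x),s_{x,t})$ and then change variables so that the Jacobian $(1+x^2)^{3/2}$ cancels the cube of the weight, reducing the estimate to the Oberlin--Stein bound for $R$. Your version is in fact slightly sharper, since by exploiting the symmetry $Rf(-\sigma,-s)=Rf(\sigma,s)$ you obtain the exact identity $\|Tf\|_3^3=\tfrac12\|Rf\|_{L^3(S^1\times\mathbb{R})}^3$, whereas the paper simply bounds the half-circle integral by the full one.
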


\begin{proof}
We reduce Theorem \ref{t:T_n=1_bdd} to a statement about the Radon
transform that was proven in \cite{MR667786}. We fix a smooth
compactly supported function $f$ and start by writing
\begin{align}
\|Tf\|_3&=\left[\int_{\mathbb{R}^2} \left|\int_{\mathbb{R}} f(y,t+xy)\,dy\right|^{3}d(x,t)\right]^{\frac{1}{3}}\\
&= \left[\int_{\mathbb{R}^2} \left|\int_{\mathbb{R}}
f(y,t+xy)\sqrt{1+x^2}\,dy\right|^{3}\frac{d(x,t)}{(1+x^2)^{3/2}}\right]^{\frac{1}{3}}\notag\\
&=\left[ \int_{\mathbb{R}^2}\left|\int_{\ell_{x,t}}
f\,d\lambda_{\ell_{x,t}}\right|^3\,\frac{d(x,t)}{(1+x^2)^{3/2}}\right]^{\frac{1}{3}}.\label{eq:star}
\end{align}
Here $d\lambda_{\ell_{x,t}}$ denotes the $1$-dimensional Lebesgue
measure on the line
\begin{displaymath}
\ell_{x,t}:= \left\{z\in\mathbb{R}^2:\, \langle z,\sigma(x)\rangle
= \frac{t}{\sqrt{1+x^2}} \right\} = \{(y,t+ xy):\; y\in
\mathbb{R}\}\text{ with }\sigma(x):=
\frac{1}{\sqrt{1+x^2}}\begin{pmatrix}-x\\1\end{pmatrix}.
\end{displaymath}
Thus, recalling the definition of the Radon transform in
\eqref{eq:DefRf}, we obtain from \eqref{eq:star} that
\begin{align*}
\|Tf\|_3 &= \left[
\int_{\mathbb{R}^2}|Rf(\sigma(x),s_{x,t})|^3\,\frac{d(x,t)}{(1+x^2)^{3/2}}\right]^{\frac{1}{3}}\\
&= \left[
\int_{\mathbb{R}}\left(\int_{\mathbb{R}}|Rf(\sigma(x),s_{x,t})|^3\,\frac{dt}{\sqrt{1+x^2}}\right)\frac{dx}{1+x^2}\right]^{\frac{1}{3}}
\end{align*}
with $s_{x,t}= t/\sqrt{1+x^2}$. Changing variables in the inner
integral, and observing that $x\mapsto \sigma(x)$ parameterizes an
arc in $S^1$, we then deduce that
\begin{displaymath}
\|Tf\|_3 =\left[
\int_{\mathbb{R}}\left(\int_{\mathbb{R}}|Rf(\sigma(x),s)|^3\,ds\right)|\sigma'(x)|\,dx\right]^{\frac{1}{3}}\leq
\left[
\int_{S^1}\left(\int_{\mathbb{R}}|Rf(\sigma,s)|^3\,ds\right)\,d\sigma\right]^{\frac{1}{3}}=\|Rf\|_{3},
\end{displaymath}
where $\sigma$ denotes the usual Lebesgue (arc-length) measure on
$S^1$. Now the theorem follows from the inequality $\|Rf\|_3 \leq
C \|f\|_{\frac{3}{2}}$ for the Radon transform, which was
established as a special case of \cite[Theorem 1]{MR667786}.
\end{proof}

Theorem \ref{mainStrong} is an immediate corollary of  Theorem
\ref{t:T_n=1_bdd}.

\begin{proof}[Proof of Theorem \ref{mainStrong}]
It suffices to prove the theorem for nonnegative smooth, compactly
supported functions on $\mathbb{R}^2$. Indeed, if $f_1$ is an
arbitrary nonnegative Lebesgue measurable function on
$\mathbb{R}^2$, we take a sequence $(f_{1,k})_{k\in\mathbb{N}}$ of
nonnegative $\mathcal{C}^{\infty}_{c}$ functions which converges
to $f_1$ with respect to $\|\cdot\|_{3/2}$ and pointwise almost
everywhere. In the same way, we approximate a given nonnegative
Lebesgue measurable function $f_2$ by a sequence
$(f_{2,k})_{k\in\mathbb{N}}$ of nonnegative
$\mathcal{C}^{\infty}_{c}$ functions. Then, assuming that the
theorem holds for nonnegative $\mathcal{C}^{\infty}_{c}$
functions, we apply it to the pair $f_{1,k},f_{2,k}$ for every
$k\in \mathbb{N}$. The desired inequality \eqref{form1Strong} for
the functions $f_1,f_2$ follows by Fatou's lemma, observing that
for $j\in \{1,2\}$, the sequence $(f_{j,k}\circ \pi_j)_{k\in
\mathbb{N}}$ converges pointwise almost everywhere to $(f_j\circ
\pi_j)_{j\in \mathbb{N}}$ since the preimage of a Lebesgue null
set in $\mathbb{R}^2$ is a Lebesgue null set in $\mathbb{R}^3$,
according to the remark below Theorem \ref{mainIntroStrong}.

We now prove the theorem for nonnegative $\mathcal{C}^{\infty}_c$
functions on $\mathbb{R}^2$. Let $f_1$ and $f_2$ be such functions
and let us prove that they satisfy the inequality
\eqref{form1Strong}. To this end, we rewrite  the left-hand side
using the volume-preserving diffeomorphism
\begin{displaymath}
\Phi:\mathbb{R}^3 \to \mathbb{R}^3,\quad \Phi(x,y,t)=(x,0,t)\cdot
(0,y,0)=\left(x,y,t+\tfrac{1}{2}x y\right).
\end{displaymath}
With this definition,
\begin{displaymath}
\pi_1(\Phi(x,y,t))=(y,t+xy)\quad\text{and}\quad\pi_2(\Phi(x,y,t))=(x,t)
\end{displaymath}
for all $(x,y,t)\in \mathbb{R}^3$. Hence the left-hand side of
\eqref{eq:formula} can be expressed as follows:
\begin{align*}
\int_{\mathbb{R}^3} f_1(\pi_1(p))f_2(\pi_2(p))\;dp & =
\int_{\mathbb{R}^2}\int_{\mathbb{R}}f_1(\pi_1(\Phi(x,y,t))f_2(\pi_2(\Phi(x,y,t)))\,
dy \,d(x,t)\\
&=
\int_{\mathbb{R}^2}\left(\int_{\mathbb{R}}f_1(y,t+xy)\,dy\right)
f_2(x,t)\,d(x,t)\\
&= \int_{\mathbb{R}^2}Tf_1(x,t) f_2(x,t)\,d(x,t),
\end{align*}
using the linear operator $T$ defined in \eqref{eq:defT_for_n=1}.
Thus, it follows from H\"older's inequality with exponents $p=3$
and $p'=3/2$, and the mapping property of $T$ stated in Theorem
\ref{t:T_n=1_bdd}, that
\begin{displaymath}
\int_{\mathbb{R}^3} f_1(\pi_1(p))f_2(\pi_2(p))\;dp \leq \|Tf_1\|_3
\|f_2\|_{\frac{3}{2}}\leq C\|f_1\|_{\frac{3}{2}}
\|f_2\|_{\frac{3}{2}},
\end{displaymath}
as desired.
\end{proof}

\section{Inequalities in higher-dimensional Heisenberg groups}\label{s:Hn}

In this section we prove Theorem \ref{mainIntroStrong} for
arbitrary $n>1$ by induction, using Theorem \ref{mainStrong} as a
base case. To be precise, instead of directly aiming at inequality
\eqref{form18Strong} in Theorem \ref{mainIntroStrong}, we will
prove Theorem \ref{t:induction theorem} first. Its statement
reflects the algebraic structure of the Heisenberg group. In
brief, for a fixed   $k\in \{1,\ldots,n\}$, the different Lebesgue
exponents on the right-hand side of \eqref{eq:vertex_eq} appear by
applying once the commutator relation $[X_k,X_{n+k}]=\partial_t$,
where $X_k$ and $X_{n+k}$ are defined as  in \eqref{eq:Xj}. This
is done by employing the strong-type bound for $\mathbb{H}^1$
given by Theorem \ref{mainStrong}. After this initial step, the
 remaining steps  of the induction use only
standard properties of integrals and elementary estimates by
H\"older's and Minkowski's integral inequalities.

\begin{thm}\label{t:induction theorem}
Fix $n\in \mathbb{N}$. Then, for all nonnegative Lebesgue
measurable functions $f_1,\ldots,f_{2n}$ on $\mathbb{R}^{2n}$, we
have
\begin{equation}\label{eq:vertex_eq}
\int_{\mathbb{R}^{2n+1}} \prod_{j=1}^{2n} f_j(\pi_j(p))\;dp
\lesssim \|f_k\|_{\frac{2n+1}{2}}\|f_{n+k}\|_{\frac{2n+1}{2}}
\prod_{\substack{j=1\\j\neq k}}^n\left(
\|f_j\|_{2n+1}\,\|f_{n+j}\|_{2n+1}\right),\quad k\in
\{1,\ldots,n\},
\end{equation}
with an implicit constant that may depend on $n$. For $n=1$, the
right-hand side of \eqref{eq:vertex_eq} equals $
\|f_1\|_{\frac{3}{2}}\|f_{2}\|_{\frac{3}{2}}$.
\end{thm}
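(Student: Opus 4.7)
Proof plan. We reduce to Theorem~\ref{mainStrong} via Fubini, one application of the $\mathbb{H}^1$ strong-type bound, and iterated H\"older's inequality on the outer variables. The base case $n=1$ is Theorem~\ref{mainStrong}; assume $n \geq 2$. By the permutation symmetry of \eqref{eq:vertex_eq} under relabelings of the pairs $(x_j, x_{n+j})$, we may take $k = n$. Decompose $I$ via Fubini into an outer integral over $x' = (x_1, \ldots, x_{n-1}, x_{n+1}, \ldots, x_{2n-1}) \in \mathbb{R}^{2n-2}$ of an inner integral $J(x')$ over $(x_n, x_{2n}, t) \in \mathbb{R}^3$. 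For each fixed $x'$, the formulas in \eqref{eq:ProjForm} realize $f_n(\pi_n(p))$ and $f_{2n}(\pi_{2n}(p))$ as $g_1^{x'}(\pi_1^{\mathbb{H}^1}(x_n, x_{2n}, t))$ and $g_2^{x'}(\pi_2^{\mathbb{H}^1}(x_n, x_{2n}, t))$, respectively, for 2D slices $g_1^{x'}, g_2^{x'}$ of $f_n, f_{2n}$; each remaining factor $f_j(\pi_j(p))$ with $j \notin \{n, 2n\}$ becomes a general nonnegative function $\phi_j^{x'}(x_n, x_{2n}, t)$ on $\mathbb{R}^3$.

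\textbf{Inner step.} Apply H\"older's inequality on $\mathbb{R}^3$ with exponent $(2n+1)/3$ on $g_1^{x'}(\pi_1^{\mathbb{H}^1}) g_2^{x'}(\pi_2^{\mathbb{H}^1})$ and exponent $2n+1$ on each of the $2n-2$ factors $\phi_j^{x'}$; the reciprocals sum to $3/(2n+1) + (2n-2)/(2n+1) = 1$. Bound the $L^{(2n+1)/3}$-norm of the first factor by applying Theorem~\ref{mainStrong} to the functions $(g_1^{x'})^{(2n+1)/3}$ and $(g_2^{x'})^{(2n+1)/3}$; this yields, up to a constant, $\|g_1^{x'}\|_{(2n+1)/2} \|g_2^{x'}\|_{(2n+1)/2}$. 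The outcome is
\begin{equation*}
J(x') \lesssim \|g_1^{x'}\|_{(2n+1)/2} \|g_2^{x'}\|_{(2n+1)/2} \prod_{j \neq n, 2n} \|\phi_j^{x'}\|_{2n+1}.
\end{equation*}
This is the sole invocation of the $\mathbb{H}^1$ strong-type bound, corresponding to the commutator $[X_n, X_{2n}] = \partial_t$.

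\textbf{Outer step.} The structural observation is that $\|\phi_j^{x'}\|_{2n+1}$ depends only on the $2n-3$ outer coordinates other than $x_j$ itself, since $x_j = 0$ in $\phi_j^{x'}$ and the translation-invariant $L^{2n+1}$-norm absorbs the $t$-shift appearing in $\pi_j$. Integrate the inner-step bound over $x'$ iteratively, one outer variable at a time: when integrating over $x_j$, the factor $\|\phi_j^{x'}\|_{2n+1}$ (or its nested norm accumulated in prior steps) is independent of $x_j$ and pulls outside the integral, while H\"older on the remaining $2n-1$ factors with exponent pattern $((2n+1)/2, (2n+1)/2, 2n+1, \ldots, 2n+1)$ (two of the former and $2n-3$ of the latter) has reciprocal sum $2 \cdot 2/(2n+1) + (2n-3)/(2n+1) = 1$. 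After all $2n-2$ outer integrations, Fubini collapses the nested norms: each $\|\phi_j^{x'}\|_{2n+1}$ accumulates to $\|f_j\|_{2n+1}$, and each $g_i^{x'}$-norm ($i \in \{n, 2n\}$) accumulates to $\|f_i\|_{(2n+1)/2}$, yielding the claim.

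\textbf{Main obstacle.} The principal difficulty lies in the combinatorial bookkeeping in the outer step: at each iteration one must verify that precisely one $\phi_j$-factor drops out of the current integration and that the H\"older exponents on the remaining factors sum reciprocally to $1$ throughout the $2n-2$ iterations. This matching is exact thanks to the Heisenberg pairing $(x_j, x_{n+j})$ together with the consistent choice of exponents $(2n+1)/2$ and $2n+1$, but organizing the argument requires careful tracking of variable dependencies through the nested norms.
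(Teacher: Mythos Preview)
Your proof is correct, but it follows a genuinely different route from the paper's. The paper proceeds by \emph{induction on $n$}: it peels off $f_{2n}$ via H\"older, then $f_n$ via Minkowski's integral inequality and another H\"older step, and finally recognizes the remaining $(2n-2)$-fold product as an $\mathbb{H}^{n-1}$-integral to which the inductive hypothesis applies. Your argument is \emph{direct}: you freeze the $2n-2$ outer variables, apply the $\mathbb{H}^1$ bound (Theorem~\ref{mainStrong}) once on the inner $(x_k,x_{n+k},t)$-slice, and then handle the outer integration by a Loomis--Whitney-style iterated H\"older, exploiting that $\|\phi_j^{x'}\|_{2n+1}$ is independent of the single coordinate $x_j$ while the two $g$-factors depend on all of $x'$. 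Your approach is arguably more elementary, since it avoids Minkowski's inequality and requires no inductive bookkeeping; the paper's approach, on the other hand, exhibits explicitly how the $\mathbb{H}^n$ inequality sits on top of the $\mathbb{H}^{n-1}$ one, which is the structural theme running through Section~\ref{s:Hn} and its generalization in Section~\ref{s:Generalized}. Both generalize to the setting of Theorem~\ref{t:induction theorem_nonperp}: in your argument the inner step uses exactly the $L^{3/2}$--$L^3$ property of Definition~\ref{d:inductiveL32L3property}, and the outer step goes through unchanged since the $t$-shift $h_j(x)$ is still absorbed by translation invariance of the $L^{2n+1}$-norm.
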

The Lebesgue exponents in Theorem \ref{t:induction theorem}
correspond to vertex points on the boundary of the Newton polytope
in \cite[Section 3]{MR2895344} and as such are not covered by
\cite[Theorem 3]{MR2895344}.  For instance, the exponents in
\eqref{eq:vertex_eq} for $k=1<n$ corresponds to
$b(p)=(2,1,\ldots,1,2,1,\ldots,1)$ in the notation of
\cite[(2.5)]{MR2895344}.

For $n=1$, the statements of Theorem \ref{t:induction theorem} and
Theorem \ref{mainIntroStrong} are equivalent. For $n>1$, Theorem
\ref{t:induction theorem}, \eqref{eq:vertex_eq}, consists of $n$
separate inequalities. Knowing that they all hold for all
nonnegative measurable functions, one can deduce the inequality
\begin{equation}\label{eq:strong_type_bound_goal}
\int_{\mathbb{R}^{2n+1}} \prod_{j=1}^{2n} f_j(\pi_j(p))\;dp
\lesssim \prod_{j=1}^{2n} \|f_j\|_{\frac{n(2n+1)}{n+1}}
\end{equation}
postulated in Theorem \ref{mainIntroStrong} by multilinear
interpolation, as we will explain below the next remark.

\begin{remark}
If one is only interested in the Loomis-Whitney inequality in
$\mathbb{H}^n$ (Theorem \ref{mainIntro2}), and not in the
strong-type bound stated in Theorem \ref{mainIntroStrong}, then
one can finish the proof without using multilinear interpolation.
In particular all the geometric consequences that we list in
Section \ref{s:Appl} can be obtained by this simpler argument.
Indeed, let $K\subset \mathbb{R}^{2n+1}$ be a compact set. Then
Theorem \ref{t:induction theorem} implies that
\begin{equation*}
|K| \lesssim |\pi_k(K)|^{\frac{2}{2n+1}}
|\pi_{n+k}(K)|^{\frac{2}{2n+1}} \prod_{\substack{j=1\\j\neq
k}}^n\left(
|\pi_j(K)|^{\frac{1}{2n+1}}\,|\pi_{n+j}(K)|^{\frac{1}{2n+1}}\right).
\end{equation*}
for all $k\in \{1,\ldots,n\}$.
 Multiplying these $n$ inequalities together, we obtain
\begin{displaymath}
|K|^n \lesssim  \prod_{j=1}^{2n}|\pi_j(K)|^{\frac{n+1}{2n+1}},
\end{displaymath}
from where the Loomis-Whitney inequality in $\mathbb{H}^n$ follows
by taking the $n$-th root.
\end{remark}

To prove Theorem \ref{mainIntroStrong}, we will rephrase Theorem
\ref{t:induction theorem} by duality as bounds of the type
\begin{equation}\label{eq:OpAssk}
\|T(f_1,\ldots,f_{2n-1})\|_{q_k} \lesssim \prod_{j=1}^{2n-1}
\|f_j\|_{p_{j,k}},\quad \text{ for }k=1,\ldots,n,
\end{equation}
for a certain multilinear operator $T$. Then multilinear
interpolation will allow us to deduce the bound
\begin{equation}\label{eq:GoalOperatorBound}
\|T(f_1,\ldots,f_{2n-1})\|_{q}\lesssim \prod_{j=1}^{2n-1}
\|f_j\|_{p_j}
\end{equation}
with
\begin{equation}\label{eq:InterpolationQExponents}
\frac{1}{q} =  \frac{1}{n} \sum_{k=1}^n \frac{1}{q_k}, \quad
\text{and}\quad \frac{1}{p_j}= \frac{1}{n}\sum_{k=1}^n
\frac{1}{p_{j,k}},\quad j=1,\ldots, 2n-1.
\end{equation}
Finally, \eqref{eq:GoalOperatorBound} will yield
\eqref{eq:strong_type_bound_goal}. Before turning to the details,
we state the multilinear interpolation theorem which will be
applied repeatedly to infer \eqref{eq:GoalOperatorBound} from
\eqref{eq:OpAssk}. It can be proven by the method of complex
interpolation \cite{MR167830,MR0482275} and we simply state here a
version that is useful for our purposes. The theorem is formulated
for \emph{finitely simple functions} on a measure space. These are
functions of the form $\sum_{i=1}^N c_i \chi_{E_i}$ with the
requirement that $E_i$ is a measurable set of finite mass. In our
application, the relevant measure spaces will all be equal to
$\mathbb{R}^{2n}$ with the Lebesgue measure.

\begin{thm}[Corollary 7.2.11 in \cite{MR3243741}]\label{t:Grafakos} Assume that $T$
is an $m$-linear operator on the $m$-fold product of spaces of
finitely simple functions of $\sigma$-finite measure spaces
$(Y_j,\mu_j)$, and suppose that $T$ takes values in the set of
measurable functions of a $\sigma$-finite measure space $(Z,\nu)$.
Let $1\leq p_{1,j},p_{2,j},q_1,q_2\leq \infty$ for all $1\leq
j\leq m$, $0<\theta<1$. Suppose that for all finitely simple $f_j$
on $Y_j$ one has
\begin{displaymath}
\|T(f_1,\ldots,f_m)\|_{q_1}\leq M_1
\prod_{j=1}^m\|f_j\|_{p_{1,j}}\quad\text{and}\quad
\|T(f_1,\ldots,f_m)\|_{q_2}\leq M_2\prod_{j=1}^m
\|f_j\|_{p_{2,j}}.
\end{displaymath}
Then for all finitely simple functions $f_j$ on $Y_j$ it holds
that
\begin{displaymath}
\|T(f_1,\ldots,f_m)\|_q \leq M_1^{1-\theta} M_2^{\theta}
\prod_{j=1}^m\|f_j\|_{p_j},
\end{displaymath}
where
\begin{displaymath}
\frac{1}{q}=\frac{1-\theta}{q_1}+\frac{\theta}{q_2}\quad\text{and}\quad
\frac{1}{p_j}=\frac{1-\theta}{p_{1,j}}+\frac{\theta}{p_{2,j}}\quad\text{for
}j=1,\ldots,m.
\end{displaymath}
\end{thm}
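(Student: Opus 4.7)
The plan is to run a multilinear version of the Riesz--Thorin complex interpolation argument based on the Hadamard three lines lemma; the statement is in fact the standard multilinear Riesz--Thorin theorem and is proved exactly as in the bilinear case.

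First I would set up the dual formulation. Fix finitely simple $f_j$ on $(Y_j,\mu_j)$ and a finitely simple $g$ on $(Z,\nu)$, normalized so that $\|f_j\|_{p_j}=1$ and $\|g\|_{q'}=1$, where $1/q+1/q'=1$. By the duality characterization of the $L^q$-norm (valid since $q\in[1,\infty]$ by interpolation of the given $q_1,q_2$) it suffices to show
\begin{equation*}
\left|\int_Z T(f_1,\dots,f_m)\,g\,d\nu\right|\le M_1^{1-\theta}M_2^{\theta}.
\end{equation*}
Write $f_j=\sum_i c_{i,j}\chi_{A_{i,j}}$ and $g=\sum_\ell d_\ell\chi_{B_\ell}$ with nonzero coefficients $c_{i,j},d_\ell\in\mathbb{C}$ and pairwise disjoint finite-measure sets $A_{i,j}$ (for each fixed $j$) and $B_\ell$.

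Next I would construct an analytic family of simple functions parameterized by the closed strip $\overline{S}=\{z\in\mathbb{C}:0\le \mathrm{Re}\,z\le 1\}$. Set
\begin{equation*}
\frac{1}{p_j(z)}=\frac{1-z}{p_{1,j}}+\frac{z}{p_{2,j}},\qquad \frac{1}{r(z)}=\frac{1-z}{q_1'}+\frac{z}{q_2'},
\end{equation*}
so that $p_j(\theta)=p_j$ and $r(\theta)=q'$, and define
\begin{equation*}
f_j(z):=\sum_i|c_{i,j}|^{p_j/p_j(z)}\operatorname{sgn}(c_{i,j})\chi_{A_{i,j}},\qquad g(z):=\sum_\ell|d_\ell|^{q'/r(z)}\operatorname{sgn}(d_\ell)\chi_{B_\ell},
\end{equation*}
so that $f_j(\theta)=f_j$ and $g(\theta)=g$. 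Since $p_j/p_j(z)=A_j+B_jz$ with $A_j,B_j\in\mathbb{R}$, a direct computation gives for every $y\in\mathbb{R}$
\begin{equation*}
\|f_j(iy)\|_{p_{1,j}}=\|f_j\|_{p_j}^{p_j/p_{1,j}}=1\quad\text{and}\quad \|f_j(1+iy)\|_{p_{2,j}}=1,
\end{equation*}
with analogous identities for $g(z)$ in $L^{q_1'}$ and $L^{q_2'}$.

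Now I would analyze
\begin{equation*}
F(z):=\int_Z T\bigl(f_1(z),\dots,f_m(z)\bigr)\,g(z)\,d\nu.
\end{equation*}
By $m$-linearity of $T$, $F$ is a finite linear combination of terms of the form $\alpha_k\,e^{\beta_k z}$ with $\alpha_k\in\mathbb{C}$, $\beta_k\in\mathbb{R}$, so $F$ is entire and bounded on $\overline{S}$. On the left boundary, Hölder's inequality and the first hypothesis yield $|F(iy)|\le \|T(f_1(iy),\dots,f_m(iy))\|_{q_1}\|g(iy)\|_{q_1'}\le M_1$, and similarly $|F(1+iy)|\le M_2$. Hadamard's three lines lemma then gives $|F(\theta)|\le M_1^{1-\theta}M_2^\theta$, and since $F(\theta)=\int T(f_1,\dots,f_m)g\,d\nu$, the desired pairing bound follows. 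Taking the supremum over admissible $g$ and undoing the normalization recovers the stated inequality for general finitely simple $f_j$.

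The main obstacle will be the careful treatment of degenerate exponents $p_{1,j},p_{2,j},q_1,q_2\in\{1,\infty\}$. When such a value is $\infty$, the ratio $p_j/p_j(z)$ (or its counterpart for $g$) has to be interpreted via the convention $1/\infty=0$, and the corresponding boundary function degenerates to a unimodular multiple of $\chi_{\operatorname{supp} f_j}$; one then verifies separately that $\|f_j(iy)\|_\infty\le 1$ with this modification, and if both endpoint exponents coincide with $\infty$ one simply sets $f_j(z)\equiv f_j$. A parallel convention covers the case $q_k=\infty$ (so $q_k'=1$). Once these edge cases are dispatched by routine bookkeeping, all remaining steps are elementary algebra plus the classical three lines lemma applied to the bounded entire function $F$.
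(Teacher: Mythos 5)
The paper does not supply a proof of this theorem: it simply cites Corollary 7.2.11 of Grafakos and notes that the result ``can be proven by the method of complex interpolation.'' Your proposal is precisely that proof, carried out correctly. You set up the dual pairing, build the standard analytic family $f_j(z)$, $g(z)$ with exponents interpolating affinely in $z$ so the boundary norms match the hypotheses, observe that the pairing $F(z)$ is a finite sum of $\alpha_k e^{\beta_k z}$ with real $\beta_k$ (hence entire and bounded on the strip), and conclude by the three-lines lemma. Your identification of the degenerate endpoint cases ($p_{i,j}$ or $q_i$ equal to $1$ or $\infty$, or $q'=\infty$ when $q=1$) as the only places requiring care is accurate, and the proposed fix — interpreting ratios via $1/\infty=0$ and reducing to unimodular multiples of indicator functions, verifying the relevant boundary $L^\infty$ bound directly — is exactly how Grafakos handles them. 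In short: correct, and the same route as the cited reference; the only substantive remark is that since the paper defers to the literature here, there is nothing in the paper's argument to compare against beyond the general method you already name.
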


Recalling Theorem \ref{mainStrong}, it suffices to prove  Theorem
\ref{mainIntroStrong} for $n>1$.

\begin{proof}[Proof of Theorem \ref{mainIntroStrong} for $n>1$ using Theorem \ref{t:induction theorem}]
Assume that the statement of Theorem \ref{t:induction theorem}
holds for a fixed natural number $n>1$. Our aim is to verify
\eqref{eq:strong_type_bound_goal} for all nonnegative measurable
functions $f_1,\ldots,f_{2n}$ on $\mathbb{R}^{2n}$. The desired
inequality can be spelled out as follows:
\begin{equation}\label{eq:DesiredIneqExplicit}
\int_{\mathbb{R}^{2n+1}} \prod_{j=1}^{n} \left(f_j(\hat
x_j,t+\tfrac{1}{2}x_j x_{n+j})\,f_{n+j}(\hat
x_{n+j},t-\tfrac{1}{2}x_j x_{n+j})\right)\;d(x,t) \lesssim
\prod_{j=1}^{2n} \|f_j\|_{\frac{n(2n+1)}{n+1}}.
\end{equation}
Here we have used the same notational convention as at the
beginning of Section \ref{ss:Heis}. The coordinate expressions
appearing in \eqref{eq:DesiredIneqExplicit} help us to define a
multilinear operator $T$ for which a bound of the type
\eqref{eq:GoalOperatorBound} will yield
\eqref{eq:DesiredIneqExplicit}. The idea is, essentially, to
express the left-hand side of \eqref{eq:DesiredIneqExplicit} as
the pairing of $T(f_1,\ldots,f_{2n-1})$ with $f_{2n}$, similarly
as we did in the proof of Theorem \ref{mainStrong}. To bring the
integral into this form, we first apply the Fubini-Tonelli theorem
and then the change of variables $\tau= t-\frac{1}{2}x_nx_{2n}$ in
the $t$-coordinate so that the left-hand side of
\eqref{eq:strong_type_bound_goal} equals
\begin{align}
&\int_{\mathbb{R}^{2n+1}} \prod_{j=1}^{2n} f_j(\pi_j(p))\;dp
\label{id:Op}\\&= \int_{\mathbb{R}^{2n}}
\left[\int_{\mathbb{R}}f_n(\hat x_n,\tau+x_nx_{2n})
\prod_{\substack{j=1\\j\neq n}}^{2n-1}
f_j(\pi_j(x,\tau+\tfrac{1}{2}x_nx_{2n}))\,dx_{2n}\right]
f_{2n}(\hat x_{2n},\tau)\; d(\hat x_{2n},\tau).\notag
\end{align}
This identity motivates the following definition of the operator
$T$. For all finitely simple functions $g_1,\ldots,g_{2n-1}$ on
$\mathbb{R}^{2n}$, we define
\begin{align*}\label{def:T}
T (g_1,\ldots,g_{2n-1})(\hat x_{2n},\tau)
:=\int_{\mathbb{R}}g_n(\hat x_n,\tau+x_nx_{2n})
\prod_{\substack{j=1\\j\neq n}}^{2n-1} g_j(\pi_j(x,
,\tau+\tfrac{1}{2}x_nx_{2n}))\,dx_{2n}.\notag
\end{align*}
Using \eqref{id:Op}, and applying H\"older's inequality with
exponents $n(2n+1)/(n+1)$ and its dual exponent
\begin{equation}\label{eq:q}
q:=\frac{n(2n+1)}{2n^2-1}, \end{equation} we find for all
nonnegative finitely simple functions $f_1,\ldots,f_{2n-1}$ and
all nonnegative measurable functions $f_{2n}$ that
\begin{align*}
\int_{\mathbb{R}^{2n+1}} \prod_{j=1}^{2n} f_j(\pi_j(p))\;dp& =
\int_{\mathbb{R}^{2n}} T(f_1,\ldots,f_{2n-1})(w) f_{2n}(w)\; dw
\\&\leq \| T(f_1,\ldots,f_{2n-1})\|_q\, \|f_{2n}\|_{\frac{n(2n+1)}{n+1}}.
\end{align*}
Hence, to prove \eqref{eq:strong_type_bound_goal} for such
functions $f_1,\ldots,f_{2n-1}$, we aim to show
\begin{equation}\label{eq:GoalOperatorBound_inproof}
\|T(f_1,\ldots,f_{2n-1})\|_{q}\lesssim \prod_{j=1}^{2n-1}
\|f_j\|_{p_j},\quad \text{for
}p_1=\ldots=p_{2n-1}=\frac{n(2n+1)}{n+1}
\end{equation}
and $q$ as in \eqref{eq:q}. Having established
\eqref{eq:strong_type_bound_goal} for nonnegative finitely simple
functions, it is straightforward to obtain the inequality also for
all nonnegative measurable functions $f_1,\ldots,f_{2n}$. Indeed,
given nonnegative measurable functions $f_1,\ldots,f_{2n}$, we may
assume that the right-hand side of
\eqref{eq:strong_type_bound_goal} is finite, and then each $f_j$
is the pointwise almost everywhere limit of an increasing sequence
of nonnegative finitely simple functions that converge to $f_j$
also in $\|\cdot\|_{p_j}$-norm, and Theorem \ref{mainIntroStrong}
follows, by an analogous argument as described at the beginning of
the proof of Theorem \ref{mainStrong}.

 Thus it remains to prove the claim
\eqref{eq:GoalOperatorBound_inproof} for nonnegative finitely
simple functions. It may be illustrative to compare this with the
bound for the linear operator $T$ in Theorem \ref{t:T_n=1_bdd},
which is essentially the case $n=1$ of what we aim to prove,
albeit stated for smooth and compactly supported functions.

For $n>1$, we will deduce \eqref{eq:GoalOperatorBound_inproof}
from Theorem \ref{t:induction theorem}. Recall that the left-hand
sides of the inequalities in \eqref{eq:vertex_eq} can be expressed
as pairings of $T(f_1,\ldots,f_{2n-1})$ with $f_{2n}$, according
to the formula \eqref{id:Op} and the definition of $T$ if
$f_1,\ldots,f_{2n-1}$ are nonnegative finitely simple functions
and $f_{2n}$ is an arbitrary nonnegative measurable function. Then
the inequalities stated in \eqref{eq:vertex_eq} for $k=1,\ldots,n$
imply by duality that
\begin{equation}\label{eq:OpAssk_inproof}
\|T(f_1,\ldots,f_{2n-1})\|_{q_k} \lesssim \prod_{j=1}^{2n-1}
\|f_j\|_{p_{j,k}},\quad \text{ for }k=1,\ldots,n,
\end{equation}
{for all nonnegative finitely simple functions
$f_1,\ldots,f_{2n-1}$ on $\mathbb{R}^{2n}$,} and exponents
\begin{displaymath}
q_k=\left\{\begin{array}{ll}(2n+1)/(2n),&k=1,\ldots,n-1,
\\
(2n+1)/(2n-1),&k=n,\end{array}\right.
\end{displaymath}
and
\begin{displaymath}p_{j,k}= \left\{\begin{array}{ll}2n+1,&k\notin
\{j,j+n,j-n\},\\(2n+1)/2,&k\in \{j,j+n,j-n\}
\end{array}\right.,\quad j=1,\ldots,2n-1,\quad k=1,\ldots,n.
\end{displaymath}  Here, for
every $k=1,\ldots,n$, we take the Lebesgue exponent associated to
the $f_{2n}$-term on the right-hand side of the corresponding
inequality in \eqref{eq:vertex_eq}, and we let $q_k$ be the dual
of that exponent. This explains why the formula for $q_n$ is
different from $q_1=\ldots=q_{n-1}$. The exponent $p_{j,k}$ is
simply the Lebesgue exponent of the $f_j$-term that appears in the
$k$-th inequality of \eqref{eq:vertex_eq}.

The key property of the exponents in
\eqref{eq:GoalOperatorBound_inproof} and \eqref{eq:OpAssk_inproof}
is that they are related by convex combinations as indicated in
\eqref{eq:InterpolationQExponents}. Indeed, we compute that
\begin{equation*}
\frac{1}{q} = \frac{2n^2-1}{n(2n+1)}
=\frac{1}{n}\frac{2n-1}{2n+1}+ \sum_{k=1}^{n-1} \frac{1}{n}
\frac{2n}{2n+1}= \sum_{k=1}^n \frac{1}{n} \frac{1}{q_k},
\end{equation*}
 and similarly,
\begin{equation*}
\frac{1}{p_j}= \frac{n+1}{n(2n+1)} = \frac{1}{n}
\frac{2}{2n+1}+\sum_{\substack{k=1\\k\notin \{j,n-j\}}}^n
\frac{1}{n} \frac{1}{2n+1} = \sum_{k=1}^n
\frac{1}{n}\frac{1}{p_{j,k}},\quad j=1,\ldots, 2n-1.
\end{equation*}

To conclude the proof, we apply multilinear interpolation. Theorem
\ref{t:Grafakos} allows us to interpolate between two operator
bounds. In order to deduce \eqref{eq:GoalOperatorBound_inproof}
from the family of $n$ operator bounds stated in
\eqref{eq:OpAssk_inproof}, we apply Theorem \ref{t:Grafakos} for
$m=2n-1$ iteratively $(n-1)$-times, noting that
\eqref{eq:OpAssk_inproof} also holds for finitely simple
functions, as required by Theorem \ref{t:Grafakos}. The specific
form of the exponents is not used in this argument, we only need
to know that we are dealing with convex combinations as in
\eqref{eq:InterpolationQExponents}, and observe the identity
\begin{displaymath}
\tfrac{1}{k}[a_1+\cdots+ a_{k}] =
\left(1-\tfrac{1}{k}\right)\left(\tfrac{1}{k-1}[a_1+\cdots+a_{k-1}]\right)+\tfrac{1}{k}
a_k,
\end{displaymath}
for $k>1$, which allows to obtain
\eqref{eq:GoalOperatorBound_inproof} by successive interpolation.
More precisely, we apply first Theorem \ref{t:Grafakos} for
$\theta=\frac{1}{2}$ to the two operator bounds given by
\eqref{eq:OpAssk_inproof} for $k=1$ and $k=2$. Then we apply
Theorem \ref{t:Grafakos} with $\theta=\frac{1}{3}$ to interpolate
between this newly obtained bound and the operator bound stated in
\eqref{eq:OpAssk_inproof} for $k=3$. We continue until, in the
last step, we apply the theorem with $\theta=\frac{1}{n}$ to
interpolate between the previously obtained bound and the bound
for $k=n$. This yields \eqref{eq:GoalOperatorBound_inproof}  for
all nonnegative finitely simple functions $f_1,\ldots,f_{2n-1}$,
and thus concludes the proof of the theorem.
\end{proof}

\begin{proof}[Proof of Theorem \ref{t:induction theorem}]
First, by the same reasoning as at the beginning of the proof of
Theorem \ref{mainStrong}, it suffices to verify the claim for
nonnegative, smooth, and compactly supported functions.

We  fix $n\in \mathbb{N}$, $n>1$, and assume that the statement of
Theorem \ref{t:induction theorem} has already been proven for all
natural numbers from $1$ to $n-1$. Recall that the base case of
this induction is the content of Theorem \ref{mainStrong}. Given
nonnegative $\mathcal{C}^{\infty}_{c}$ functions
$f_1,\ldots,f_{2n}$, we now aim to show the $n$ inequalities
stated in \eqref{eq:vertex_eq}. We will explain the details only
for $k=1$, as the other inequalities can be proven in exactly the
same manner.

Throughout the following computation, points in
$\mathbb{R}^{2n+1}$ will be denoted in coordinates by $(x,t)$ with
$x\in \mathbb{R}^{2n}$ and $t\in \mathbb{R}$. For $1\leq i<2n$, we
also write $\hat{x}_{j_1,\ldots,j_i}$ to denote the point in
$\mathbb{R}^{2n-i}$ that is obtained by deleting the
$j_1,\ldots,j_i$-th coordinates of $x$.

First, we apply the Fubini-Tonelli theorem and then the
transformation $t\mapsto t-\tfrac{1}{2}x_n x_{2n} = \tau$ in the
inner integral:
\begin{align*}
I:= &\int_{\mathbb{R}^{2n}} \int_{\mathbb{R}} \prod_{j=1}^{2n}
f_j(\pi_j(x,t))\,dt\,dx= \int_{\mathbb{R}^{2n}} \int_{\mathbb{R}}
\prod_{j=1}^{2n}f_j(\pi_j(x,\tau+\tfrac{1}{2}x_n x_{2n}))\,d\tau dx\\
= &\int_{\mathbb{R}^{2n}} \int_{\mathbb{R}}  f_n(\hat x_n,\tau +
x_n x_{2n}) f_{2n}(\hat x_{2n}, \tau) \prod_{\substack{j=1\\j\neq
n,2n}}^{2n}f_j(\pi_j(x,\tau+\tfrac{1}{2}x_n x_{2n}))\,d\tau \,dx\\
= & \int_{\mathbb{R}^{2n}}  f_{2n}(\hat x_{2n}, \tau)\left[
\int_{\mathbb{R}}  f_n(\hat x_n,\tau + x_n x_{2n})
\prod_{\substack{j=1\\j\neq
n,2n}}^{2n}f_j(\pi_j(x,\tau+\tfrac{1}{2}x_n
x_{2n}))\,dx_{2n}\right]\,d(\hat x_{2n},t).
\end{align*}
Here, $d\hat x_{2n}= dx_1\ldots dx_{2n-1}$, and similar notation
will be used also below. The change of variables was motivated by
the observation that
\begin{displaymath}\pi_{2n}\left(x,\tau+\tfrac{1}{2}x_n x_{2n}\right)=(\hat
x_{2n},\tau),\end{displaymath} so that the $f_{2n}$-term becomes
independent of the $2n$-th coordinate of $x$. Applying H\"older's
inequality with exponents $p=2n+1$ and $p'=(2n+1)/2n$, we can
split this factor off to obtain $I\leq \|f_{2n}\|_{2n+1}\, J$ with
\begin{align*}
J:=\left[\int_{\mathbb{R}^{2n}} \left(\int_{\mathbb{R}}f_n(\hat
x_n,\tau + x_n x_{2n})\prod_{\substack{j=1\\j\neq
n,2n}}^{2n}f_j(\pi_j(x,\tau+\tfrac{1}{2}x_n x_{2n}))\,d
x_{2n}\right)^{\frac{2n+1}{2n}} \,d(\hat
x_{2n},t)\right]^{\frac{2n}{2n+1}}.
\end{align*}
The remaining task is to show that
\begin{equation}\label{eq:JGoal}
J \lesssim
\|f_1\|_{\frac{2n+1}{2}}\|f_{n+1}\|_{\frac{2n+1}{2}}\|f_n\|_{2n+1}\,
\prod_{j=2}^{n-1} \left(\|f_j\|_{2n+1}\|f_{n+j}\|_{2n+1}\right).
\end{equation}
We will next extract the $f_n$-term from the expression $J$.
First, by Minkowski's integral inequality, Fubini's theorem, and
the transformation $\tau \mapsto t= \tau+ x_n x_{2n}$, we obtain
the bound
\begin{align*}
J&\leq \int_{\mathbb{R}}
\left[\int_{\mathbb{R}^{2n-1}}\int_{\mathbb{R}} f_n(\hat
x_n,t)^{\frac{2n+1}{2n}}\prod_{\substack{j=1\\j\neq
n,2n}}^{2n}f_j(\pi_j(x,t-\tfrac{1}{2}x_n
x_{2n}))^{\frac{2n+1}{2n}}\,dt\,d\hat
x_{2n}\right]^{\frac{2n}{2n+1}}\,dx_{2n}.
\end{align*}
After this transformation, the $f_n$-term is independent of the
$n$-th coordinate of $x$. We can separate it from the other
factors by applying H\"older's inequality with exponents $p=2n$
and $p'= 2n/(2n-1)$ to the expression inside the square brackets.
This yields
\begin{align}\label{eq:J}
J&\leq \int_{\mathbb{R}} F_n\, F_{\Pi}\,dx_{2n}
\end{align}
where
\begin{displaymath}
F_n:= \left[\int_{\mathbb{R}^{2n-1}} f_n(\hat x_n,t)^{2n+1}
\,d(\hat x_{n,2n},t)\right]^{\frac{1}{2n+1}}
\end{displaymath}
and
\begin{displaymath}
F_{\Pi}:= \left[ \int_{\mathbb{R}^{2n-1}} \left(\int_{\mathbb{R}}
\prod_{\substack{j=1\\j\neq
n,2n}}^{2n}f_j(\pi_j(x,t-\tfrac{1}{2}x_n
x_{2n}))^{\frac{2n+1}{2n}} \,dx_n\right)^{\frac{2n}{2n-1}}
\,d(\hat x_{n,2n},t) \right]^{\frac{2n-1}{2n+1}}.
\end{displaymath}
Applying once more H\"older's inequality, but now to the
$x_{2n}$-integral in \eqref{eq:J}, and with exponents $p=2n+1$ and
$p'=(2n+1)/2n$, yields
\begin{displaymath}
J\leq \left( \int_{\mathbb{R}}
F_n^{2n+1}\,dx_{2n}\right)^{\frac{1}{2n+1}}\, \left(
\int_{\mathbb{R}}
F_{\Pi}^{\frac{2n+1}{2n}}\,dx_{2n}\right)^{\frac{2n}{2n+1}}= J_n
\cdot J_{\Pi}.
\end{displaymath}
Here
\begin{displaymath}
J_n := \left( \int_{\mathbb{R}}
F_n^{2n+1}\,dx_{2n}\right)^{\frac{1}{2n+1}} = \left(
\int_{\mathbb{R}^{2n}}  f_n(\hat x_n,t)^{2n+1} \,d(\hat
x_{n},t)\right)^{\frac{1}{2n+1}} =\|f_n\|_{2n+1}
\end{displaymath}
is one of the factors in the desired upper bound for $J$, recall
\eqref{eq:JGoal}. Hence, in order to prove \eqref{eq:JGoal}, it
suffices to show that
\begin{equation}\label{eq:JProdGoal}
J_{\Pi}:=  \left( \int_{\mathbb{R}}
F_{\Pi}^{\frac{2n+1}{2n}}\,dx_{2n}\right)^{\frac{2n}{2n+1}}
\lesssim \|f_1\|_{\frac{2n+1}{2}}\|f_{n+1}\|_{\frac{2n+1}{2}}
\prod_{j=2}^{n-1}\left( \|f_j\|_{2n+1}\,
\|f_{n+j}\|_{2n+1}\right).
\end{equation}
To do so, we will finally use our induction hypothesis. We start
by expanding
\begin{align*}
J_{\Pi}=  \left( \int_{\mathbb{R}}
 \left[ \int_{\mathbb{R}^{2n-1}} \left(\int_{\mathbb{R}}
\prod_{\substack{j=1\\j\neq
n,2n}}^{2n}f_j(\pi_j(x,t-\tfrac{1}{2}x_n
x_{2n}))^{\frac{2n+1}{2n}} \,dx_n\right)^{\frac{2n}{2n-1}}
\,d(\hat x_{n,2n},t)
\right]^{\frac{2n-1}{2n}}\,dx_{2n}\right)^{\frac{2n}{2n+1}}.
\end{align*}
Applying Minkowski's integral inequality inside the square
brackets, then Fubini's theorem and the transformation $t\mapsto
\tau=t - \frac{1}{2}x_n x_{2n}$ yields
\begin{align}\label{eq:J_Pi_Int}
J_{\Pi} &\leq \left(\int_{\mathbb{R}^2}
\left[\int_{\mathbb{R}^{2n-1}}\prod_{\substack{j=1\\j\neq
n,2n}}^{2n}f_j(\pi_j(x,\tau))^{\frac{2n+1}{2n-1}}\,d(\hat
x_{n,2n},\tau)\right]^{\frac{2n-1}{2n}}\,
d(x_n,x_{2n})\right)^{\frac{2n}{2n+1}}.
\end{align}
We recall that
\begin{equation}\label{eq:ProjFjForm}
f_j(\pi_j(x,\tau)) = \left\{\begin{array}{ll}f_j(\hat
x_j,\tau+\frac{1}{2}x_j x_{n+j}),&\text{if }j=1,\ldots,n-1,\\
f_j(\hat x_j,\tau-\frac{1}{2}x_{j-n} x_{j}),&\text{if
}j=n+1,\ldots,2n-1.
\end{array} \right.
\end{equation}
We will continue the upper bound for $J_{\Pi}$ by applying the
induction hypothesis to the expression inside the square brackets.
To do so, we temporarily denote points in $\mathbb{H}^{n-1}$ in
coordinates by $(u,t)=(u_1,\ldots,u_{2n-2},\tau)$. Here, $u$ is a
point in $\mathbb{R}^{2n-2}$, and similarly as before, $\hat u_k$
denotes the point in $\mathbb{R}^{2n-3}$ that is obtained from $u$
by deleting the $k$-th coordinate.

To write the inner integral on the right-hand side of
\eqref{eq:J_Pi_Int} in a form where the induction hypothesis is
applicable, we fix $x_n,x_{2n}\in \mathbb{R}$ and define the
 functions $g_{x_n,x_{2n},j}$, $j\in \{1,\ldots,2n-2\}$ on $\mathbb{R}^{2n-2}$:
\begin{equation}\label{eq:def_gj}
g_{x_n,x_{2n},j}(\hat u_j,t):=
\left\{\begin{array}{ll}f_1(u_2,\ldots,u_{n-1},x_n,u_{n},\ldots,u_{2n-2},x_{2n},t)^{\frac{2n+1}{2n-1}},&
j=1,\\
f_j(u_1,\ldots,u_{j-1},u_{j+1},\ldots,u_{n-1},x_n,u_{n},\ldots,u_{2n-2},x_{2n},t&)^{\frac{2n+1}{2n-1}},\\
&2 \leq j\leq n-2\\
f_{n-1}(u_1,\ldots,u_{n-2},x_n,u_{n},\ldots,u_{2n-2},x_{2n},t)^{\frac{2n+1}{2n-1}},&
 j=n-1,
\end{array}\right.
\end{equation}
and
\begin{equation}\label{eq:def_gjn}
g_{x_n,x_{2n},j}(\hat u_j,t):=\left\{\begin{array}{ll}
f_{n+1}(u_1,\ldots,u_{n-1},x_n,u_{n+1},\ldots,
u_{2n-2},x_{2n},t)^{\frac{2n+1}{2n-1}},&j=n,\\
f_{j+1}(u_1,\ldots,u_{n-1},x_n,u_{n},\ldots,u_{j-1},u_{j+1},\ldots
u_{2n-2},&x_{2n},t)^{\frac{2n+1}{2n-1}},\\&n+1\leq j\leq 2n-3,\\
f_{2n-1}(u_1,\ldots,u_{n-1},x_n,u_{n},\ldots,
u_{2n-3},x_{2n},t)^{\frac{2n+1}{2n-1}},&j=2n-2.
\end{array}\right.
\end{equation}
 With this notation in place, and
recalling \eqref{eq:ProjFjForm},
we can restate \eqref{eq:J_Pi_Int} equivalently as follows
\begin{align*}
J_{\Pi} &\leq \left(\int_{\mathbb{R}^2}
\left[\int_{\mathbb{R}^{2n-1}}\prod_{j=1}^{2n-2}g_{x_n,x_{2n},j}(\pi_j(u,t))\,d(u,t)\right]^{\frac{2n-1}{2n}}\,
d(x_n,x_{2n})\right)^{\frac{2n}{2n+1}},
\end{align*}
where $\pi_j$ now denotes the Heisenberg projection from
$\mathbb{H}^{n-1}$ to the vertical plane $\{u_j=0\}$ (identified
with $\mathbb{R}^{2n-2}$). The induction hypothesis applied to the
inner integral yields
\begin{align}\label{eq:J_Pi_BeforeHol}
J_{\Pi} &\lesssim \left(\int_{\mathbb{R}^2}
\left[\|g_{x_n,x_{2n},1}\|_{{\frac{2n-1}{2}}}\|g_{x_n,x_{2n},n}\|_{{\frac{2n-1}{2}}}\prod_{\substack{j=1\\j\notin{1,n}}}^{2n-2}
\|g_{x_n,x_{2n},j}\|_{{2n-1}}\right]^{\frac{2n-1}{2n}}\,
d(x_n,x_{2n})\right)^{\frac{2n}{2n+1}}.
\end{align}
Next we apply the multilinear H\"older inequality  with exponents
\begin{displaymath}
p_1=p_n=n\quad\text{and}\quad
p_2=\ldots=p_{n-1}=p_{n+1}=\ldots=p_{2n-2}=2n.
\end{displaymath}
Note that
\begin{displaymath}
\sum_{j=1}^{2n-2} \frac{1}{p_j} = \frac{2}{n}+ \frac{2n-4}{2n} =
1,
\end{displaymath}
as desired. Hence we deduce from \eqref{eq:J_Pi_BeforeHol} that
\begin{align*}
J_{\Pi} \lesssim & \left(\int_{\mathbb{R}^2}
\|g_{x_n,x_{2n},1}\|_{\frac{2n-1}{2}}^{\frac{2n-1}{2}}d(x_n,x_{2n})\right)^{\frac{2}{2n+1}}
\left(\int_{\mathbb{R}^2}
\|g_{x_n,x_{2n},n}\|_{\frac{2n-1}{2}}^{\frac{2n-1}{2}}d(x_n,x_{2n})\right)^{\frac{2}{2n+1}}
\cdot \\& \prod_{\substack{j=1\\j\notin{1,n}}}^{2n-2}
\left(\int_{\mathbb{R}^2}
\|g_{x_n,x_{2n},j}\|_{2n-1}^{2n-1}d(x_n,x_{2n})\right)^{\frac{1}{2n+1}}
.
\end{align*}
Recalling the definition of $g_{x_n,x_{2n},j}$ for
$j=1,\ldots,2n-2$ as stated in \eqref{eq:def_gj} and
\eqref{eq:def_gjn}, we obtain immediately
\begin{displaymath}
J_{\Pi} \lesssim
\|f_1\|_{\frac{2n+1}{2}}\|f_{n+1}\|_{\frac{2n+1}{2}}
\prod_{j=2}^{n-1}\left( \|f_j\|_{2n+1}\,
\|f_{n+j}\|_{2n+1}\right).
\end{displaymath}
as desired; recall \eqref{eq:JProdGoal}. This proves
\eqref{eq:JGoal} and thus establishes the statement about $k=1$ in
the induction claim \eqref{eq:vertex_eq} for $n$. The other values
of $k$ are treated analogously, and hence we have established
\eqref{eq:vertex_eq}.
\end{proof}

\section{Applications of the Loomis-Whitney inequalities in Heisenberg
groups}\label{s:Appl}

In this section, we derive the Gagliardo-Nirenberg-Sobolev
inequality in $\mathbb{H}^n$, and its variant Theorem
\ref{mainIntro3}, from the Loomis-Whitney inequality, Theorem
\ref{mainIntro2}. As a corollary of Theorem \ref{mainIntro3}, we
obtain the isoperimetric inequality in $\He^n$ (with a non-optimal
constant). At the end of the section, we also show how the
Loomis-Whitney inequality can be used, directly, to infer a
variant of the isoperimetric inequality, without passing through
the Sobolev inequality.

The arguments presented here are very standard
(\cite{MR0031538,MR102740,MR1863692}), and we claim no
originality. A version of this section, in the context of the
first Heisenberg group, was already contained in our joint work
\cite{fassler2020planar} with Tuomas Orponen. In his thesis
\cite{Bramati}, Bramati also gave an argument to deduce the
Gagliardo-Nirenberg-Sobolev and isoperimetric inequalities in
$\mathbb{H}^1$ from the strong version of the Loomis-Whitney
inequality stated in Theorem \ref{mainStrong}.

 We start by
recalling the statement of Theorem \ref{mainIntro3}:

\begin{thm}\label{t:sobolev} Let $f \in BV(\He^n)$. Then,
\begin{equation}\label{eq:GNS} \|f\|_{\frac{2n+2}{2n+1}} \lesssim \prod_{j=1}^{2n} \|X_jf\|^{\frac{1}{2n}}. \end{equation}
\end{thm}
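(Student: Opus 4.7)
The approach is to first prove the inequality for $f \in C^\infty_c(\He^n)$ and then extend to $BV(\He^n)$ by a Meyers-Serrin-type smooth approximation. The key geometric input is that, for each $j \in \{1,\ldots,2n\}$, the left-invariant horizontal vector field $X_j$ is tangent to the fibers of $\pi_j$. Indeed, $\pi_j^{-1}\{w\} = w\cdot\mathbb{L}_j$ is parameterized by $s\mapsto w\cdot(s e_j,0)$, where $e_j$ is the $j$-th standard basis vector of $\R^{2n}$, and since $X_j$ is left-invariant with $X_j|_0 = \partial_{x_j}|_0$, one has $\tfrac{d}{ds} f(w\cdot(s e_j,0)) = (X_j f)(w\cdot(s e_j,0))$. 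For $f \in C^\infty_c(\He^n)$, the fundamental theorem of calculus therefore yields
\[ |f(p)| \leq g_j(\pi_j(p)), \qquad g_j(w) := \int_\R |X_j f(w\cdot(se_j,0))|\, ds, \quad j = 1,\ldots,2n. \]
A short computation with the group law \eqref{eq:GroupProd} shows that the map $\W_j \times \R \to \He^n$, $(w,s)\mapsto w\cdot(se_j,0)$, has unit Jacobian determinant, so by the change of variables formula and Fubini one has $\|g_j\|_{L^1(\W_j)} = \|X_j f\|_{L^1(\He^n)}$.

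Multiplying the $2n$ pointwise bounds and raising to the power $\beta := (n+1)/(n(2n+1))$ produces $|f(p)|^{2n\beta} \leq \prod_{j=1}^{2n} g_j(\pi_j(p))^\beta$. Crucially $2n\beta = (2n+2)/(2n+1)$, so integrating and applying the functional Loomis-Whitney inequality (Theorem \ref{mainIntroStrong}) to $f_j := g_j^\beta$ gives
\[ \|f\|_{\frac{2n+2}{2n+1}}^{\frac{2n+2}{2n+1}} \leq \int_{\He^n}\prod_{j=1}^{2n} g_j(\pi_j(p))^\beta\,dp \lesssim \prod_{j=1}^{2n} \|g_j^\beta\|_{\frac{n(2n+1)}{n+1}}. \]
The choice of $\beta$ ensures $\beta\cdot n(2n+1)/(n+1) = 1$, and hence $\|g_j^\beta\|_{n(2n+1)/(n+1)} = \|g_j\|_{L^1}^\beta = \|X_j f\|_{L^1}^\beta$. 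Taking the $(2n+1)/(2n+2)$-th power of the resulting inequality, and noting that $\beta(2n+1)/(2n+2) = 1/(2n)$, produces \eqref{eq:GNS} for smooth compactly supported $f$.

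The passage from $C^\infty_c(\He^n)$ to $BV(\He^n)$ is the main technical step. Using a Meyers-Serrin-type density theorem in Heisenberg groups --- group-convolution mollifiers combined with a standard cutoff, in the spirit of Franchi-Serapioni-Serra Cassano --- one produces a sequence $f_k \in C^\infty_c(\He^n)$ with $f_k \to f$ almost everywhere and $\|X_j f_k\|_{L^1(\He^n)} \to \|X_j f\|$ simultaneously for all $j = 1,\ldots,2n$. Applying the smooth inequality to each $f_k$ and passing to the limit via Fatou's lemma on the left-hand side concludes the proof. The delicate aspect is ensuring componentwise convergence of the horizontal partial total variations for every $j$ at once; this is possible because the left-invariance of each $X_j$ allows the mollification to interact compatibly with each distributional derivative $X_j f$, giving the sharp asymptotic $\limsup_{k\to\infty}\|X_j f_k\|_{L^1} \leq \|X_j f\|$ matched by the lower semicontinuity of total variation.
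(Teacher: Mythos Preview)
Your proof is correct but follows a genuinely different route from the paper's. The paper argues via a dyadic level-set decomposition: with $F_k = \{2^{k-1} \leq |f| \leq 2^k\}$, a lemma shows $|\pi_j(F_k)| \lesssim 2^{-k}\int_{F_{k-1}}|X_j f|$, after which the \emph{weak-type} Loomis--Whitney inequality (Theorem~\ref{mainIntro2}) is applied to each $F_k$, followed by H\"older's inequality and the embedding $\ell^1 \hookrightarrow \ell^{(2n+2)/(2n+1)}$. Your approach instead bounds $|f(p)|$ pointwise by each $g_j(\pi_j(p))$ via the fundamental theorem of calculus along fibers, multiplies, and invokes the \emph{strong-type} inequality (Theorem~\ref{mainIntroStrong}) directly. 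This is the classical Gagliardo-style derivation and is arguably cleaner; the paper in fact acknowledges this variant (attributed to Bramati for $n=1$). The tradeoff is that your argument requires the strong-type bound, which in the paper is obtained only after multilinear interpolation, whereas the paper's level-set argument needs only the weak-type Loomis--Whitney inequality~--- which, as the remark following Theorem~\ref{t:induction theorem} emphasizes, can be reached by the simpler route of multiplying the $n$ vertex inequalities and taking $n$-th roots, with no interpolation needed.
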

Recall that $f \in BV(\He^n)$ if $f \in L^{1}(\He^n)$, and the
distributional derivatives $X_jf$, $j=1,\ldots,2n$, are finite
signed Radon measures. Smooth compactly supported functions are
dense in $BV(\He^n)$ in the sense that if $f \in BV(\He^n)$, then
there exists a sequence $\{\varphi_{k}\}_{k \in \N} \subset
C^{\infty}_{c}(\R^{2n+1})$ such that $\varphi_{k} \to f$ almost
everywhere (and in $L^{1}(\He^n)$ if desired), and
$\|Z\varphi_{k}\| \to \|Z f\|$ for $Z \in \{X_1,\ldots,X_{2n}\}$.
For a reference, see \cite[Theorem 2.2.2]{MR1437714}. With this
approximation in hand, it suffices to prove Theorem
\ref{t:sobolev} for, say, $f \in C^{1}_{c}(\R^{2n+1})$. The
following lemma contains most of the proof:
\begin{lemma}\label{l:sobolev} Let $f \in C^{1}_{c}(\R^{2n+1})$, and write
\begin{equation}\label{eq:Fk} F_{k} := \{p \in \R^{2n+1} : 2^{k - 1} \leq |f(p)| \leq 2^{k}\}, \qquad k \in \Z. \end{equation}
Then,
\begin{equation}\label{form20} |\pi_{j}(F_{k})| \leq 2^{-k + 2} \int_{F_{k - 1}} |X_jf|
,\quad j=1,\ldots 2n.\end{equation}
\end{lemma}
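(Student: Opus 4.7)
The plan is to reduce the statement to a one-dimensional fundamental-theorem-of-calculus estimate along the fibres of $\pi_j$, then integrate using Fubini. The crucial geometric input I would extract first is that the fibres of $\pi_j$ are integral curves of the horizontal vector field $X_j$. Indeed, the fibre $w \cdot \mathbb{L}_j$ through $w \in \mathbb{W}_j$ can be parametrized as $s \mapsto p_s := w \cdot (s e_j)$, and a direct computation from \eqref{eq:GroupProd} shows that for $j \in \{1,\ldots,n\}$ this curve equals $(\hat{x}_j + s e_j,\, t - \tfrac{x_{n+j}}{2} s)$, whose tangent vector is $\partial_{x_j} - \tfrac{x_{n+j}}{2}\partial_t = X_j$ (and the analogous identity holds for $j \in \{n+1,\ldots,2n\}$). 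Consequently, $\tfrac{d}{ds} f(p_s) = (X_j f)(p_s)$. Moreover the map $\Phi_j : \mathbb{W}_j \times \mathbb{R} \to \mathbb{R}^{2n+1}$, $(w,s) \mapsto w \cdot (s e_j)$, has Jacobian identically $1$ (its matrix is lower triangular in suitably ordered coordinates with $1$'s on the diagonal), so it is a volume-preserving diffeomorphism and will serve as our Fubini slicing.

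Next I would fix $w \in \pi_j(F_k)$ and carry out the level-set argument on the fibre. By hypothesis there is some $p_{s^\ast}$ on this fibre with $|f(p_{s^\ast})| \geq 2^{k-1}$; since $f \in C^1_c$, $f(p_s) \to 0$ as $|s| \to \infty$. Using continuity and the intermediate value theorem along the fibre I would pick $s_1$ to be the first parameter past $s^\ast$ with $|f(p_{s_1})| = 2^{k-2}$, and then $s_0 \in [s^\ast, s_1]$ to be the last parameter with $|f(p_{s_0})| = 2^{k-1}$. By construction $|f(p_s)| \in [2^{k-2}, 2^{k-1}]$ for all $s \in [s_0, s_1]$, i.e., $p_s \in F_{k-1}$ on this segment. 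The fundamental theorem of calculus then gives
\begin{equation*}
2^{k-2} \;=\; 2^{k-1} - 2^{k-2} \;\leq\; \bigl|f(p_{s_0}) - f(p_{s_1})\bigr| \;\leq\; \int_{s_0}^{s_1} |X_j f(p_s)|\,ds \;\leq\; \int_{\Phi_j(\{w\}\times \mathbb{R}) \cap F_{k-1}} |X_j f|\,ds.
\end{equation*}

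Finally I would integrate the resulting pointwise bound
$$
2^{k-2}\,\chi_{\pi_j(F_k)}(w) \;\leq\; \int_{\mathbb{R}} |X_j f(\Phi_j(w,s))|\,\chi_{F_{k-1}}(\Phi_j(w,s))\,ds
$$
over $w \in \mathbb{W}_j$ with respect to Lebesgue measure on $\mathbb{R}^{2n}$, and invoke Fubini together with the volume-preservation of $\Phi_j$ to collapse the right-hand side into $\int_{F_{k-1}} |X_j f|\,dp$. Rearranging yields \eqref{form20}.

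I do not expect any serious obstacle: the only points that require some care are (i) verifying the tangent-field identification, which amounts to unwinding \eqref{eq:GroupProd}, (ii) the selection of the parameters $s_0, s_1$ so that the entire intervening fibre segment lies in $F_{k-1}$ (handled by choosing $s_1$ first as a "first hitting time" and then $s_0$ as a "last hitting time" inside $[s^\ast, s_1]$), and (iii) checking that the parameter $s$ on the fibre is in fact the integration variable inherited from the Fubini decomposition $dp = ds\, dw$, which is precisely the content of the Jacobian-$1$ computation for $\Phi_j$.
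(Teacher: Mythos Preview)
Your proposal is correct and follows essentially the same route as the paper: parametrize the fibre $w\cdot\mathbb{L}_j$ by $s\mapsto w\cdot(se_j)$, identify its tangent with $X_j$, use continuity and compact support to locate a subsegment on which $|f|$ runs between $2^{k-2}$ and $2^{k-1}$ (hence lies in $F_{k-1}$), apply the fundamental theorem of calculus, and then integrate over $w$ using the volume-preserving change of variables $\Phi_j$. Your selection of $s_1$ as a first hitting time and $s_0$ as a last hitting time is a slightly more explicit version of the paper's ``there is a non-degenerate segment $I$'' step, but the substance is identical.
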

\begin{proof} By symmetry, it suffices to prove the inequality in \eqref{form20} for $j=1,\ldots,n$.
Let $w = (\hat x_{j},t) \in \pi_{j}(F_{k})$, denote by $e_j$ the
$j$-th unit vector, and fix $p = w \cdot x_j e_j \in F_{k}$ such
that $\pi_{j}(p) = w$. In particular, $|f(p)| \geq 2^{k - 1}$.
Recall the notation $\mathbb{L}_{j} = \mathrm{span}(e_j)=\{x_j e_j
: x_{j} \in \R\}$ and the definition of $\hat x_j$ given below
\eqref{eq:GroupProd}. Since $f$ is compactly supported, we may
pick another point $p' \in w \cdot \mathbb{L}_{j}$ such that
$f(p') = 0$. Since $|f|$ is continuous, we infer that there is a
non-degenerate line segment $I$ on the line $w \cdot
\mathbb{L}_{j}$ such that $2^{k - 2} \leq |f(q)| \leq 2^{k - 1}$
for all $q \in I$ (hence $I \subset F_{k - 1}$), and $|f|$ takes
the values $2^{k - 2}$ and $2^{k - 1}$, respectively, at the
endpoints $q_{i} = w \cdot x_{j,i}e_j$ of $I$, $i \in \{1,2\}$.
Define $\gamma(x_j) := w \cdot x_j e_j = (x,t - \tfrac{1}{2}x_j
x_{n+j})$. With this notation,
\begin{align*}
 2^{k - 2}& \leq |f(q_{1}) - f(q_{2})| \leq \int_{x_{j,1}}^{x_{j,2}} |(f \circ \gamma)'(x_j)| \, dx_j
 \\&\leq \int_{\{x_j : (x,t - \frac{1}{2}x_jx_{n+j})
 \in F_{k - 1}\}} |X_jf(x,t - \tfrac{1}{2}x_jx_{n+j})| \, dy.
  \end{align*}
Writing $\Phi(x,t) := (\hat x_{j},t) \cdot x_j e_j = (x,t -
\tfrac{1}{2}x_j x_{n+j})$, and integrating over
$$(x_1,\ldots,x_{j-1},x_{j+1},\ldots,x_{2n},t) = (\hat
x_{j},t) \in \pi_{j}(F_{k}) \subset \W_{j},$$ it follows that
\begin{equation}\label{form21}
2^{k - 2}|\pi_{j}(F_{k})|\leq \int_{\pi_{j}(F_{k})} \left[
\int_{\{x_{j} : \Phi(x,t) \in F_{k - 1}\}} |X_{j}f(\Phi(x,t))| \,
dx_{j} \right] \, d\hat{x}_{j} \, dt.
\end{equation}
Finally, we note that $J_{\Phi} = \mathrm{det\,} D\Phi \equiv 1$.
Therefore, using Fubini's theorem, and performing a change of
variables to the right-hand side of \eqref{form21}, we see that
\begin{align*} 2^{k - 2}|\pi_{j}(F_{k})|
& \leq \int_{\{(x,t) \in \R^{2n+1} : \Phi(x,t) \in F_{k - 1}\}} |X_{j}f(\Phi(x,t))| \, d(x,t)\\
& = \int_{F_{k - 1}} |X_{j}f(x,t)| \, d(x,t).
\end{align*} This completes the proof. \end{proof} We are then
prepared to prove Theorem \ref{t:sobolev}:
\begin{proof}[Proof of Theorem \ref{t:sobolev}]
Fix $f \in C^{1}_{c}(\R^{2n+1})$, and define the sets $F_{k}$, $k
\in \Z$, as in \eqref{eq:Fk}. Using first Theorem
\ref{mainIntro2}, then Lemma \ref{l:sobolev}, then the generalized
H\"older's inequality with $p_1=\ldots=p_{2n}=2n$, and finally the
embedding $\ell^{1} \hookrightarrow \ell^{(2n+2)/(2n+1)}$, we
estimate as follows:
\begin{align*} \int |f|^{\frac{2n+2}{2n+1}} &\sim \sum_{k \in \Z}
2^{\frac{(2n+2)k}{2n+1}}|F_{k}|\\
& \lesssim \sum_{k \in \Z} 2^{\frac{(2n+2)k}{2n+1}}\prod_{j=1}^{2n} |\pi_{j}(F_{k})|^{\frac{n+1}{n(2n+1)}}\\
& \lesssim \sum_{k \in \Z} \prod_{j=1}^{2n}
\Big(\int_{F_{k - 1}} |X_jf| \Big)^{\frac{n+1}{n(2n+1)}}\\
& \lesssim \prod_{j=1}^{2n} \Big[ \sum_{k \in \Z} \Big( \int_{F_{k - 1}} |X_jf| \Big)^{\frac{2n+2}{2n+1}} \Big]^{\frac{1}{2n}}\\
& \lesssim \prod_{j=1}^{2n} \Big[\sum_{k \in \Z} \int_{F_{k - 1}}
|X_jf| \Big]^{\frac{2n+2}{2n(2n+1)}} \sim \prod_{j=1}^{2n}
\|X_jf\|_{1}^{\frac{2n+2}{2n(2n+1)}}.
\end{align*} Raising both sides to the power $(2n+1)/(2n+2)$
completes the proof. \end{proof}

We conclude the section by discussing isoperimetric inequalities.
A measurable set $E \subset \He^n$ has \emph{finite horizontal
perimeter} if $\chi_{E} \in BV(\He^n)$. Here $\chi_{E}$ is the
characteristic function of $E$. Note that our definition of
$BV(\He^n)$ implies, in particular, that $|E| < \infty$. We follow
common practice, and write $P_{\mathbb{H}}(E) := \|\nabla_{\He}
\chi_{E}\|$. For more information on sets of finite horizontal
perimeter, see \cite{FSSC}. Now, applying Theorem \ref{t:sobolev}
to $f = \chi_{E}$, we recover the following isoperimetric
inequality (with a non-optimal constant):
\begin{thm} There exists a constant $C>0$ such that
\begin{equation}\label{eq:isop}
|E|^{\frac{2n+1}{2n+2}}\leq C P_{\mathbb{H}}(E)
\end{equation}
for any measurable set $E \subset \He^n$ of finite horizontal
perimeter.
\end{thm}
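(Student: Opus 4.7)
The plan is to deduce the isoperimetric inequality \eqref{eq:isop} directly from the sharper Gagliardo-Nirenberg-Sobolev inequality in Theorem \ref{t:sobolev}, applied to the characteristic function of $E$. Since $E \subset \He^n$ has finite horizontal perimeter, by definition $\chi_E \in BV(\He^n)$, so Theorem \ref{t:sobolev} is applicable to $f = \chi_E$.

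First, I would compute the left-hand side of \eqref{eq:GNS}: since $\chi_E$ takes only the values $0$ and $1$,
\begin{displaymath}
\|\chi_E\|_{\frac{2n+2}{2n+1}} = \left(\int_{\He^n} \chi_E\,dp\right)^{\frac{2n+1}{2n+2}} = |E|^{\frac{2n+1}{2n+2}}.
\end{displaymath}
Next, I would bound the product of horizontal derivative norms on the right-hand side of \eqref{eq:GNS}. By the AM-GM inequality,
\begin{displaymath}
\prod_{j=1}^{2n} \|X_j \chi_E\|^{\frac{1}{2n}} \leq \frac{1}{2n}\sum_{j=1}^{2n}\|X_j\chi_E\|.
\end{displaymath}
Since $P_{\He}(E) = \|\nabla_{\He}\chi_E\|$ is the total variation of the $\mathbb{R}^{2n}$-valued Radon measure $\nabla_{\He}\chi_E = (X_1\chi_E,\ldots,X_{2n}\chi_E)$, each component satisfies $\|X_j \chi_E\| \leq P_{\He}(E)$, and hence
\begin{displaymath}
\prod_{j=1}^{2n} \|X_j \chi_E\|^{\frac{1}{2n}} \leq P_{\He}(E).
\end{displaymath}

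Combining these two observations with Theorem \ref{t:sobolev} applied to $f = \chi_E$ yields
\begin{displaymath}
|E|^{\frac{2n+1}{2n+2}} \lesssim \prod_{j=1}^{2n}\|X_j\chi_E\|^{\frac{1}{2n}} \leq P_{\He}(E),
\end{displaymath}
which is the desired inequality \eqref{eq:isop} with some constant $C>0$. There is no real obstacle here: the argument is a two-line consequence of Theorem \ref{t:sobolev} once the vector-valued total variation is handled, and the only minor point to note is that the resulting constant is not expected to be sharp, as already indicated in the statement of the theorem.
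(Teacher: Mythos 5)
Your proof is correct and is exactly the paper's argument: the paper deduces \eqref{eq:isop} by applying Theorem \ref{t:sobolev} to $f=\chi_E$, and you have simply spelled out the details of passing from the geometric mean $\prod_{j}\|X_j\chi_E\|^{1/2n}$ to $P_{\He}(E)$. (A minor simplification: since each factor satisfies $\|X_j\chi_E\|\le P_{\He}(E)$, the geometric mean is at most $P_{\He}(E)$ directly, without invoking AM--GM.)
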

For $n=1$, this is Pansu's isoperimetric inequality
\cite{MR676380}, which has later been generalized to $\mathbb{H}^n
$ and beyond \cite{MR1404326,MR1312686}.
 We remark that the \emph{a priori} assumption
$|E| < \infty$ is critical here; for example the theorem evidently
fails for $E = \He^n$, for which $|E| = \infty$ but
$\|\nabla_{\He} \chi_{E}\| = 0$. We conclude the paper by deducing
a weaker version of \eqref{eq:isop} (even) more directly from the
Loomis-Whitney inequality. Namely, we claim that
\begin{equation}\label{weakIP} |E|^{\frac{2n+1}{2n+2}}\leq C \calH^{2n+1}_{d}(\partial E) \end{equation}
for any bounded measurable set $E \subset \He^n$, where
$\calH^{2n+1}_{d}$ denotes the $2n+1$-dimensional Hausdorff
measure on $\He^n$ with respect to the Kor\'{a}nyi distance (or
the standard left-invariant sub-Riemannian metric).
 This inequality
is, in general, weaker than \eqref{eq:isop}: at least for open
sets $E \subset \He^n$, the property $\calH^{2n+1}_{d}(\partial E)
< \infty$ implies that $P_{\He}(E) < \infty$, and then $P_{\He}(E)
\lesssim \calH^{2n+1}_{d}(\partial E)$, see \cite[Theorem
4.18]{MR2836591}. However, if $E$ is a bounded open set with
$C^{1}$ boundary, then $\mathcal{H}^{2n+1}_{d}(\partial E) \sim
P_{\He}(E)$, see \cite[Corollary 7.7]{FSSC}.

To prove \eqref{weakIP}, we need the following auxiliary result,
see \cite[Lemma 3.4]{MR3992573} and \cite[Remark 4.7]{MR4127898}:

\begin{lemma} Let $n\in \mathbb{N}$. There exists a constant $C_n > 0$ such that the following holds.
Let $\W \subset \mathbb{H}^n$ be a vertical subgroup of
codimension $1$. Then,
\begin{equation}\label{per2}
|\pi_{\W}(A)|\leq C \mathcal{H}^{2n+1}_d(A), \qquad A \subset
\He^n.
\end{equation}
\end{lemma}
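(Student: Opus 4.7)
The plan is to prove the lemma by the classical two-step strategy: first, a pointwise estimate $|\pi_\W(B(p, r))| \leq C_n r^{2n+1}$ on Koranyi balls; second, a covering argument based on the definition of $\mathcal{H}^{2n+1}_d$. The scaling heuristic makes this essentially inevitable: a Koranyi ball satisfies $|B(p, r)| \sim r^{2n+2}$, and its projection onto the $(2n+1)$-dimensional hyperplane $\W$ should have Lebesgue measure $\sim r^{2n+1}$, matching the Hausdorff exponent in the statement.

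For Step 1, I would first reduce to the case $p = 0$. Every codimension-$1$ vertical subgroup contains the center of $\He^n$ and is therefore normal in $\He^n$, so writing $p = \pi_\W(p) \cdot \pi_\mathbb{L}(p)$ and using the normality of $\W$ gives the identity
\begin{equation*}
\pi_\W(p \cdot q) = \pi_\W(p) \cdot \bigl(\pi_\mathbb{L}(p) \cdot \pi_\W(q) \cdot \pi_\mathbb{L}(p)^{-1}\bigr).
\end{equation*}
Both left-translation by $\pi_\W(p)$ and conjugation by $\pi_\mathbb{L}(p)$ are affine, measure-preserving automorphisms of $\W \cong \R^{2n}$ (the conjugation being a shear that moves only the $t$-coordinate), so $|\pi_\W(B(p, r))| = |\pi_\W(B(0, r))|$. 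Writing $\W = \{v \cdot x = 0\}$ with $|v| = 1$ and $\mathbb{L} = \R v \times \{0\}$, a direct computation shows that for $(x, t) \in B(0, r)$, the image $\pi_\W(x, t)$ has its $(2n-1)$ spatial components bounded by $|x| \lesssim r$ and its $t$-component bounded by $|t| + O(r^2) \lesssim r^2$ — the nonlinear twist $\tfrac{1}{2}(v \cdot x)\omega(x, v)$ appearing in the projection formula contributes exactly $O(r^2)$. So $\pi_\W(B(0, r))$ lies in a Euclidean box of dimensions $\lesssim r^{2n-1} \times r^2$, of Lebesgue measure $\lesssim r^{2n+1}$.

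For Step 2, given $\varepsilon, \delta > 0$, I would pick a cover $\{E_i\}$ of $A$ with Koranyi diameters $r_i \leq \delta$ and $\sum_i r_i^{2n+1} \leq \mathcal{H}^{2n+1}_{d, \delta}(A) + \varepsilon$, enclose each $E_i$ in a Koranyi ball $B(p_i, r_i)$, and apply Step 1 to obtain
\begin{equation*}
|\pi_\W(A)| \leq \sum_i |\pi_\W(B(p_i, r_i))| \leq C_n \sum_i r_i^{2n+1} \leq C_n\bigl(\mathcal{H}^{2n+1}_{d, \delta}(A) + \varepsilon\bigr),
\end{equation*}
then send $\varepsilon \to 0$ and $\delta \to 0$. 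The main obstacle is Step 1: as emphasized in the introduction, the vertical projections are \emph{not} Lipschitz with respect to the Koranyi distance, so the ball estimate cannot be extracted from any generic Lipschitz-image bound for Hausdorff measure. What makes the argument go through is the precise matching between the anisotropic $r$-versus-$r^2$ scaling of the Koranyi ball and the quadratic growth of the twist in $\pi_\W$, which together keep the image inside a Euclidean box of the expected $(2n+1)$-dimensional size.
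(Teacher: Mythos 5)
Your proof is correct. Note first that the paper itself does not supply a proof of this lemma; it is cited from the literature (the references attached to the statement), so there is no in-paper argument to compare against. Your two-step strategy — establish the ball estimate $|\pi_\W(B(p,r))| \lesssim_n r^{2n+1}$ and then run the standard Vitali-type covering argument from the definition of $\mathcal{H}^{2n+1}_d$ — is the natural and, as far as I can tell, essentially the same approach as in the cited sources.

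The key points all check out. The conjugation identity
\begin{equation*}
\pi_\W(p\cdot q) = \pi_\W(p)\cdot\bigl(\pi_{\mathbb{L}}(p)\cdot\pi_\W(q)\cdot\pi_{\mathbb{L}}(p)^{-1}\bigr)
\end{equation*}
holds because $\W$ is normal (it contains the center and has codimension $1$), and a direct computation confirms that conjugation by an element of $\mathbb{L}$ acts on $\W\cong\R^{2n}$ as a $t$-shear $(y,t)\mapsto(y, t + s\,\omega(v,y))$, which is unimodular; combined with left-translation this gives $|\pi_\W(B(p,r))| = |\pi_\W(B(0,r))|$. For $p=0$, the Kor\'anyi ball constraints $|x|\lesssim r$, $|t|\lesssim r^2$ together with the twist term $\tfrac12(v\cdot x)\,\omega(x,v) = O(r^2)$ confine $\pi_\W(B(0,r))$ to a Euclidean box of size $\lesssim r^{2n-1}\times r^2$, i.e.\ Lebesgue measure $\lesssim r^{2n+1}$. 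The covering step is then routine. You are also right to flag that this cannot be shortcut via a generic Lipschitz-image estimate, since $\pi_\W$ is not Lipschitz for the Kor\'anyi metric; the argument genuinely relies on the quadratic twist and the anisotropic ball being precisely compatible.

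One cosmetic remark: the lemma as printed writes $C$ in \eqref{per2} but $C_n$ in the preamble; your use of $C_n$ throughout is the intended reading, as the constant does depend on $n$ (both through the dimension count $r^{2n-1}\times r^2$ and through the normalization of $\mathcal{H}^{2n+1}_d$).
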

\begin{proof}[Proof of \eqref{weakIP}]
Let $E\subset\mathbb{H}$ be bounded and measurable. We first claim that
\begin{align}\label{uguproj}
&\pi_{j}(E)\subseteq \pi_{j}(\partial E), \quad j=1,\ldots, 2n.
\end{align}
Let $w\in \pi_{j}(E)$ and consider $\pi_{j}^{-1}\{w\} = w \cdot
\mathbb{L}_{j}$ where $\mathbb{L}_{y}=\mathrm{span}(e_j)$. By
definition there exists $x_{j,1}\in \mathbb{R}$ such that $w \cdot
x_{j,1} e_j\in E$ and since $E$ is bounded there also exists
$x_{j,2}\in\mathbb{R}$ such that $w \cdot  x_{j,2}\in \mathbb{H}^n
\, \setminus \, \overline{E}$. Since $w \cdot \mathbb{L}_{j}$ is
connected, there finally exists $x_{j,3}\in\mathbb{R}$ such that
$w \cdot x_{j,3} e_j \in \partial E$ which immediately implies
\eqref{uguproj}. Using Theorem \ref{mainIntro2} and
\eqref{uguproj}, we get
\[
|E| \lesssim \prod_{j=1}^{2n}|\pi_j(\partial
E)|^{\frac{n+1}{n(2n+1)}}.
\]
Now the isoperimetric inequality \eqref{weakIP} follows using Lemma \ref{per2}.
\end{proof}



\section{Generalized Loomis-Whitney
inequalities by induction}\label{s:Generalized}

The  approach described in Section \ref{s:Hn} can be used to prove
something a bit more general, namely we can replace the vertical
Heisenberg projections $\pi_1,\ldots,\pi_{2n}$ by projection-type
mappings of the form
\begin{equation}\label{eq:rho_j}
\rho_j:\mathbb{R}^{2n+1}\to\mathbb{R}^{2n},\quad \rho_j(x,t)=(
\hat x_j, t + h_j(x)),\quad j=1,\ldots,2n,
\end{equation}
for suitable $\mathcal{C}^1$ maps $h_j:\mathbb{R}^{2n}\to
\mathbb{R}$. The precise condition is stated in Definition
\ref{d:inductiveL32L3property} and it is tailored so that a
Loomis-Whitney-type inequality for $\rho_1,\ldots,\rho_{2n}$ can
be established based on the $L^{3/2}$-$L^3$ boundedness of a
linear operator in the plane, analogously as we did for
$\pi_1,\ldots,\pi_{2n}$ and the Radon transform in Sections
\ref{s:H1}-\ref{s:Hn}. By a simple change-of-variables, one can
generalize the setting even slightly further, see Remark
\ref{r:general_rhoj}.

For arbitrary $\mathcal{C}^1$ functions $h_j$, the mappings
$\rho_j$ defined in \eqref{eq:rho_j}  satisfy a condition
analogous to \eqref{eq:RankCondition} for $\pi_j$, which ensures
by the coarea formula that the preimage of a Lebesgue null set in
$\mathbb{R}^{2n}$ under $\rho_j$ is a Lebesgue null set in
$\mathbb{R}^{2n+1}$. More precisely, we have
\begin{align*}
\det (D\rho_j D\rho_j^t)=
\det\begin{pmatrix}1&&&\\&\ddots&&\nabla_{\hat
x_j}h\\&&1&\\&\nabla_{\hat x_j}h&&(1+|\nabla h|^2)\end{pmatrix} &=
1+ (\partial_{x_j} h)^2.
\end{align*}
By the reasoning below Theorem \ref{mainIntroStrong} it follows
that $f\circ \rho_j$ is Lebesgue measurable on $\mathbb{R}^{2n+1}$
if $f$ is Lebesgue measurable on $\mathbb{R}^{2n}$.

\begin{definition}[$L^{3/2}$-$L^3$
property] \label{d:inductiveL32L3property} We say that a family
$\{h_1,\ldots,h_{2n}\}$ of $\mathcal{C}^1$ functions $h_j$ on
$\mathbb{R}^{2n}$ has the \emph{$L^{3/2}$-$L^3$ property} if there
exists a constant $C<\infty$ such that the following holds for all
$k=1,\ldots,n$:

\begin{itemize}
\item If $n>1$, then for every $\hat x_{k,n+k}\in
\mathbb{R}^{2n-2}$, the operator $T_{k,\hat x_{k,n+k}}$, defined
by
\begin{displaymath}
T_{k,\hat x_{k,n+k}}f(x_k,t):= \int_{\mathbb{R}}
f(x_{n+k},t+h_k(x)-h_{n+k}(x))\, dx_{n+k},\quad f\in
\mathcal{C}^{\infty}_c(\mathbb{R}^2)
\end{displaymath}
satisfies
\begin{displaymath}
\|T_{k,\hat x_{k,n+k}}f\|_3\leq C \|f\|_{\frac{3}{2}},\quad f\in
\mathcal{C}^{\infty}_c(\mathbb{R}^2).
\end{displaymath}
Here, the coordinates of $\hat x_{k,n+k}\in \mathbb{R}^{2n-2}$ are
$x_i$, $i\in \{1,\ldots,2n\}\setminus \{k,n+k\}$, and
$x=(x_1,\ldots,x_k,\ldots,x_{n+k},\ldots,x_{2n})$. \item  If
$n=1$, then the operator $T_1=T$, defined by
\begin{displaymath}
Tf(x_1,t):=\int_{\mathbb{R}}
f(x_2,t+h_1(x_1,x_2)-h_2(x_1,x_2))dx_2,\quad
f\in\mathcal{C}^{\infty}_c(\mathbb{R}^2)
\end{displaymath}
satisfies
\begin{displaymath}
\|Tf\|_3\leq C \|f\|_{\frac{3}{2}},\quad f\in
\mathcal{C}^{\infty}_c(\mathbb{R}^2).
\end{displaymath}
\end{itemize}
\end{definition}

We next give examples of functions $\{h_1,\ldots,h_{2n}\}$ with
the properties stated in Definition
\ref{d:inductiveL32L3property}. Essentially, for $k=1,\ldots,n$,
we take $h_k$ and $h_{n+k}$ to be polynomials of second degree as
functions of $x_k$ and $x_{n+k}$ so that Theorem \ref{t:PlanarOp}
is applicable. This class of examples includes the functions
\begin{equation}\label{eq:h_j_std}
h_j(x)=\left\{\begin{array}{ll}\tfrac{1}{2}x_j
x_{n+j},&j=1,\ldots,n,\\ -\tfrac{1}{2}
x_{j-n}x_j,&j=n+1,\ldots,2n.\end{array}\right.
\end{equation}
associated to the standard Heisenberg vertical coordinate
projections $\rho_j=\pi_j$, $j=1,\ldots,2n$.

\begin{ex}\label{ex:operators}
Fix $n>1$, $b_j\in \mathbb{R}$ and $c_{j,a}\in
\mathcal{C}^1(\mathbb{R}^{2n-2})$ for $j=1,\ldots,2n$ and
multi-indices $a\in
\mathcal{A}:=\{(0,0),(1,0),(0,1),(2,0),(0,2)\}$. For
$k=1,\ldots,n$, we define
\begin{displaymath}
h_k(x):=  b_k\, x_k x_{n+k} + \sum_{a=(a_1,a_2)\in
\mathcal{A}}c_{k,a}(\hat x_{k,n+k}) x_k^{a_1} x_{n+k}^{a_2}
\end{displaymath}
and
\begin{displaymath}
h_{n+k}(x):= b_{n+k}\, x_k x_{n+k} +
\sum_{a=(a_1,a_2)\in\mathcal{A}}c_{n+k,a}(\hat x_{k,n+k})x_k^{a_1}
x_{n+k}^{a_2}.
\end{displaymath}
Then the operators appearing in Definition
\ref{d:inductiveL32L3property} are given by
\begin{displaymath}
T_{k,\hat x_{k,n+k}}f(x_k,t):= \int_{\mathbb{R}} f\left(x_{n+k},t+
H_{k,n+k}(x)\right) \, dx_{n+k},\quad f\in
\mathcal{C}^{\infty}_c(\mathbb{R}^2),
\end{displaymath}
where
\begin{align*}
H_{k,n+k}(x):= (b_k-b_{n+k})x_k
x_{n+k}+\sum_{a=(a_1,a_2)\in\mathcal{A}}\left[c_{k,a}(\hat
x_{k,n+k})-c_{n+k,a}(\hat x_{k,n+k})\right]x_k^{a_1}
x_{n+k}^{a_2}.
\end{align*}
If $ b_k-b_{n+k}\neq 0$ for $k=1,\ldots,n$, then
$\{h_1,\ldots,h_{2n}\}$ has the $L^{3/2}$-$L^3$ property by
Theorem \ref{t:PlanarOp} with constant $C \lesssim
(\min_{k=1,\ldots,n}|b_k-b_{n+k}|)^{-1/3}$. This is the case in
particular for $\{h_1,\ldots,h_{2n}\}$ as in \eqref{eq:h_j_std}.
Hence, Theorems \ref{t:induction theorem} and
\ref{mainIntroStrong} are special cases of
 Theorems \ref{t:induction theorem_nonperp} and
\ref{mainIntroStrong_nonperp} below.
\end{ex}

We claim no originality for Theorem \ref{t:PlanarOp} that was
applied in the previous example. It is an instance of much more
general results available in the literature. We merely explain
here how the statement follows from the $L^{3/2}$-$L^3$ improving
property of (i) the Radon transform and (ii) convolution with a
measure on a parabola. Even though (i) involves integration over
lines with different slopes, and (ii) concerns convolution with a
\emph{fixed} parabola, both operators fit in the same framework
\cite[p.606]{MR1969206}.

\begin{thm}\label{t:PlanarOp}
Let $\alpha,\beta,\gamma,\delta,\epsilon,\kappa \in \mathbb{R}$.
If $\beta\neq 0$, then the operator $S$, defined by
\begin{displaymath}
Sf(x,t)=\int_{\mathbb{R}} f(y,t+ \alpha y^2 + \beta xy + \gamma
x^2 + \delta x + \epsilon y + \kappa)\,dy,\quad f\in
\mathcal{C}^{\infty}_c(\mathbb{R}^2),
\end{displaymath}
satisfies
\begin{equation}\label{eq:boundS}
\|Sf\|_{3}\lesssim |\beta|^{-1/3} \|f\|_{\frac{3}{2}},\quad f\in
\mathcal{C}^{\infty}_c(\mathbb{R}^2).
\end{equation}
\end{thm}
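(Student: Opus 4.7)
The plan is to reduce $S$, by affine changes of variable in both the input $f$ and the output $Sf$, to one of two well-understood planar operators: the operator $T$ of Theorem \ref{t:T_n=1_bdd} (essentially the planar Radon transform) when $\alpha=0$, and convolution with arc-length measure on a fixed parabola when $\alpha\neq 0$. Both are classically bounded from $L^{3/2}(\mathbb{R}^2)$ to $L^3(\mathbb{R}^2)$, so the proof amounts to setting up and tracking the changes of variable.

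First I would normalize $\beta$. Applying the dilation $x\mapsto x/\beta$ to the output converts $Sf$ into an operator $\widetilde Sf(x,t)=Sf(x/\beta,t)$ of the same form but with the coefficient of $xy$ equal to $1$ (the coefficients $\gamma,\delta$ get replaced by $\gamma/\beta^2,\delta/\beta$, while $\alpha,\epsilon,\kappa$ are unchanged). The Jacobian of this dilation yields $\|Sf\|_3=|\beta|^{-1/3}\|\widetilde Sf\|_3$, so it suffices to prove $\|\widetilde Sf\|_3\lesssim \|f\|_{3/2}$ with an implicit constant depending on the remaining coefficients. Next, the $x$-dependent shift $t\mapsto t-\gamma x^2-\delta x-\kappa$ in the output leaves $\|\widetilde Sf\|_3$ unchanged by translation invariance of the one-dimensional $L^3$ norm in $t$, so I may assume $\gamma=\delta=\kappa=0$.

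In the case $\alpha=0$, one further translation $x\mapsto x+\epsilon$ identifies $\widetilde Sf$ with the operator $T$ of Theorem \ref{t:T_n=1_bdd}, giving the required bound. In the case $\alpha\neq 0$, the substitution $y\mapsto y-\epsilon/(2\alpha)$ in the inner integral replaces $f$ by one of its translates (with identical $L^{3/2}$-norm) and removes the $\epsilon y$ term. Completing the square, $\alpha y^2+xy=\alpha(y+x/(2\alpha))^2-x^2/(4\alpha)$, absorbing $-x^2/(4\alpha)$ into another $t$-shift, performing $y\mapsto y-x/(2\alpha)$ in the inner integral, and finally rescaling $x\mapsto -2\alpha x$ in the output, recasts $\widetilde Sf$ as $(2|\alpha|)^{1/3}\,(f\ast\mu_\alpha)$, where $\mu_\alpha$ is the measure $\delta(b+\alpha a^2)\,da$ supported on the parabola $b=-\alpha a^2$. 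The classical $L^{3/2}$-$L^3$ boundedness of convolution with arc-length measure on a parabola (see the double fibration discussion in \cite[\S 9.5]{MR1726701} and \cite[\S 1]{MR1969206}) then closes the argument.

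The one point demanding care is the bookkeeping for constants: I would verify that each substitution affects the input and the output $L^p$-norms as claimed, and that the only $\beta$-dependence surviving all reductions is the explicit $|\beta|^{-1/3}$ from the initial rescaling. Beyond this, the proof invokes the two planar $L^{3/2}$-$L^3$ results as black boxes and requires no new analysis.
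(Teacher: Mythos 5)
Your proposal is correct and takes essentially the same route as the paper: case-split on $\alpha$, reduce to the Radon transform (via the operator $T$ of Theorem \ref{t:T_n=1_bdd}) when $\alpha=0$ and to convolution with a measure on a parabola when $\alpha\neq 0$, using affine changes of variable to track the Jacobian factors and verify that the only surviving dependence is $|\beta|^{-1/3}$. The paper organizes the bookkeeping for $\alpha\neq 0$ slightly differently---bundling the shifts and rescalings into a single diffeomorphism $\Phi$ whose Jacobian determinant $|\beta|\,|2\alpha|^{-1}$ is computed once---whereas you perform the transformations incrementally (and pull out the $|\beta|^{-1/3}$ factor at the start), but the underlying argument and the two invoked $L^{3/2}$-$L^3$ black boxes are identical.
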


\begin{proof}
We divide the proof in two cases: $\alpha=0$ and $\alpha\neq 0$.
In the first case, we apply the $L^{3/2}$-$L^3$ improving property
of the Radon transform \cite{MR667786} (in the form of Theorem
\ref{t:T_n=1_bdd}). In the second case, we reduce matters to the
$L^{3/2}$-$L^3$ improving property of the convolution operator
with a measure on a parabola \cite{MR0358443,MR1049762,MR1887100}.

First, if $\alpha=0$, then, for $f\in
\mathcal{C}^{\infty}_c(\mathbb{R}^2)$, we relate $Sf$ to the
operator $T$ from Theorem \ref{t:T_n=1_bdd} as follows:
\begin{align*}
Sf(x,t)= \int_{\mathbb{R}} f(y,t+ [\beta x+\epsilon]y + [\gamma
x^2 + \delta x + \kappa])\,dy= Tf(\beta x+ \epsilon,t+\gamma x^2
+\delta x +\kappa).
\end{align*}
Thus
\begin{align*}
\|Sf\|_{3}= \left(\int_{\mathbb{R}^2} |Tf(\beta x+
\epsilon,t+\gamma x^2 +\delta x
+\kappa)|^3\,d(x,t)\right)^{\frac{1}{3}}&=
|\beta|^{-1/3}\left(\int_{\mathbb{R}^2}
|Tf(\xi,\tau)|^3\,d(\xi,\tau)\right)^{\frac{1}{3}}\\& =
|\beta|^{-1/3} \|Tf\|_3,
\end{align*}
and hence Theorem \ref{t:T_n=1_bdd} implies \eqref{eq:boundS} in
that case.

If $\alpha\neq 0$, we instead reduce matters to \cite{MR0358443},
or the more general \cite[Theorem 1]{MR1049762}. A special case of
that theorem says that
\begin{equation}\label{eq:ConvParab}
\|\mu_{\alpha} \ast f\|_3 \lesssim \|f\|_{\frac{3}{2}},\quad f\in
L^{\frac{3}{2}}(\mathbb{R}^2),
\end{equation}
where
\begin{displaymath}
\mu_{\alpha} \ast f (x,t):= \int_{\mathbb{R}} f((x,t)-(y,\alpha
y^2))|\alpha|^{1/3}\,dy,
\end{displaymath}
see also \cite[Theorem 1]{MR1887100}. To employ this result, we
aim to relate $Sf$ for $f\in \mathcal{C}^{\infty}_c(\mathbb{R}^2)$
to $\mu_{\alpha} \ast f$. We apply elementary transformations to
one of the expressions that appear in the definition of $Sf$,
namely
\begin{align*}
t+ \alpha y^2 + \beta xy + \gamma x^2 &+ \delta x + \epsilon y +
\kappa
\\&= \alpha \left[y +
\tfrac{1}{2}\left(\tfrac{\beta}{\alpha}x+\tfrac{\epsilon}{\alpha}\right)\right]^2+
\left[ -\tfrac{\alpha}{4}\left(\tfrac{\beta}{\alpha}x+
\tfrac{\epsilon}{\alpha}\right)^2 +\gamma x^2+ \delta x + \kappa +
t\right].
\end{align*}
Hence, by the change-of-variables $y\mapsto \eta = - [y +
\tfrac{1}{2}\left(\tfrac{\beta}{\alpha}x+\tfrac{\epsilon}{\alpha}\right)]$,
we obtain
\begin{align*}
Sf(x,t)= &\int_{\mathbb{R}} f\left(y, \alpha \left[y +
\tfrac{1}{2}\left(\tfrac{\beta}{\alpha}x+\tfrac{\epsilon}{\alpha}\right)\right]^2+\left[
-\tfrac{\alpha}{4}\left(\tfrac{\beta}{\alpha}x+
\tfrac{\epsilon}{\alpha}\right)^2 + \gamma x^2+\delta x + \kappa +
t\right]\right)\, dy\\
=&\int_{\mathbb{R}} f\left( -
\tfrac{1}{2}\left(\tfrac{\beta}{\alpha}x+\tfrac{\epsilon}{\alpha}\right)-\eta,
\left[ -\tfrac{\alpha}{4}\left(\tfrac{\beta}{\alpha}x+
\tfrac{\epsilon}{\alpha}\right)^2 + \gamma x^2+\delta x + \kappa +
t\right]-(-\alpha)\eta^2\right)\, d\eta\\
=&|\alpha|^{-1/3} \mu_{-\alpha}\ast
f\left(\Phi(x,t)\right),\end{align*} with
\begin{displaymath}
\Phi(x,t):= \left(-
\tfrac{1}{2}\left(\tfrac{\beta}{\alpha}x+\tfrac{\epsilon}{\alpha}\right),
-\tfrac{\alpha}{4}\left(\tfrac{\beta}{\alpha}x+
\tfrac{\epsilon}{\alpha}\right)^2 + \gamma x^2 +\delta x + \kappa
+ t\right).
\end{displaymath}
Since
\begin{displaymath}
|\det D\Phi(x,t)|=\left|\beta\right| \,\left|2\alpha\right|^{-1},
\end{displaymath}
we find that
\begin{displaymath}
\|Sf\|_3 =|\alpha|^{-1/3} \|\left(\mu_{-\alpha}\ast f \right)\circ
\Phi\|_3 =|\alpha|^{-1/3} \left|\beta\right|^{-1/3}
\,\left|2\alpha\right|^{1/3} \|\mu_{-\alpha}\ast f \|_3.
\end{displaymath}
Thus \eqref{eq:boundS} in the case $\alpha\neq 0$ follows from
\eqref{eq:ConvParab}.
\end{proof}

We next prove a generalization of Theorem \ref{t:induction
theorem} that applies in particular to mappings
$\rho_1,\ldots,\rho_{2n}$ as in \eqref{eq:rho_j} for
$h_1,\ldots,h_{2n}$ as in Example \ref{ex:operators}.

\begin{thm}\label{t:induction theorem_nonperp}
Let $n\in \mathbb{N}$. Assume that $\{h_1,\ldots,h_{2n}\}$ is a
family of $\mathcal{C}^1$ functions on $\mathbb{R}^{2n}$ with the
$L^{3/2}$-$L^3$ property and define
\begin{displaymath}
\rho_j:\mathbb{R}^{2n+1}\to\mathbb{R}^{2n},\quad
\rho_j(x,t)=\left(\hat x_j, t + h_j(x)\right),\quad j=1,\ldots,2n.
\end{displaymath}
Then, for all nonnegative Lebesgue measurable functions
$f_1,\ldots,f_{2n}$ on $\mathbb{R}^{2n}$, we have
\begin{equation}\label{eq:vertex_eq_nonperp}
\int_{\mathbb{R}^{2n+1}} \prod_{j=1}^{2n} f_j(\rho_j(p))\;dp
\lesssim \|f_k\|_{\frac{2n+1}{2}}\|f_{n+k}\|_{\frac{2n+1}{2}}
\prod_{\substack{j=1\\j\neq k}}^n\left(
\|f_j\|_{2n+1}\,\|f_{n+j}\|_{2n+1}\right),\quad k\in
\{1,\ldots,n\},
\end{equation}
with an implicit constant that may depend on $n$ and the
boundedness constant $C$ associated to the family
$\{h_1,\ldots,h_{2n}\}$. If $n=1$, then
\eqref{eq:vertex_eq_nonperp} reads
\begin{displaymath}
\int_{\mathbb{R}^{3}} f_1(\rho_1(p))f_2(\rho_2(p))\;dp \lesssim
\|f_1\|_{\frac{3}{2}}\|f_{2}\|_{\frac{3}{2}}.
\end{displaymath}
\end{thm}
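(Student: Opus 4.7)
The plan is to follow the two-stage structure of Section \ref{s:Hn} almost verbatim, replacing the quadratic expressions $\pm\tfrac{1}{2}x_j x_{n+j}$ by the general functions $h_j$ and tracking how they transform under two key substitutions. The hypothesis in Definition \ref{d:inductiveL32L3property} is tailored to play exactly the role that the $L^{3/2}$-$L^3$ bound for the Radon transform (Theorem \ref{t:T_n=1_bdd}) played in the original argument.

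\emph{Base case $n=1$.} I would mimic the proof of Theorem \ref{mainStrong}. The volume-preserving substitution $t \mapsto \tau = t + h_2(x_1,x_2)$ turns $f_2(\rho_2(x_1,x_2, \tau - h_2(x)))$ into $f_2(x_1,\tau)$ and sends $f_1(\rho_1(x_1,x_2, \tau - h_2(x)))$ to $f_1(x_2, \tau + h_1(x) - h_2(x))$. Carrying out the $x_2$-integration first exposes exactly the operator $T$ from Definition \ref{d:inductiveL32L3property}, and one finishes with H\"older: $\int (Tf_1)\cdot f_2 \leq \|Tf_1\|_3\, \|f_2\|_{3/2} \leq C\,\|f_1\|_{3/2}\,\|f_2\|_{3/2}$.

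\emph{Inductive step $n>1$.} Assume the theorem in dimension $n-1$ for every family with the $L^{3/2}$-$L^3$ property, and fix $k \in \{1,\ldots,n\}$. I will describe the case $k \neq n$, in which one may reserve the pair $(n,2n)$ to be manipulated; the case $k = n$ requires only switching to a different manipulation pair, e.g.\ $(1,n+1)$, and proceeds identically. The plan is to reproduce the Fubini--H\"older--Minkowski--H\"older cascade of the proof of Theorem \ref{t:induction theorem}, using this time the two substitutions $t \mapsto \tau = t + h_{2n}(x)$ (which makes $f_{2n}\circ\rho_{2n}$ independent of $x_{2n}$) and, after Minkowski in $x_{2n}$, the substitution $\tau \mapsto s = \tau + h_n(x) - h_{2n}(x)$ (which makes $f_n\circ\rho_n$ independent of $x_n$); each has Jacobian identically $1$. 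Peeling off $\|f_{2n}\|_{2n+1}$ by the first H\"older step and $\|f_n\|_{2n+1}$ by the second leaves an inner integral of the shape
\begin{displaymath}
\int_{\mathbb{R}^{2n-1}} \prod_{j \ne n, 2n} f_j\bigl(\hat x_j, s + h_j(x) - h_n(x)\bigr)^{\frac{2n+1}{2n-1}}\, d(\hat x_{n,2n}, s),
\end{displaymath}
with $x_n,x_{2n}$ frozen. Relabeling $\hat x_{n,2n} \in \mathbb{R}^{2n-2}$ as $u$, this has the form $\int \prod g_j(\tilde\rho_j(u,s))\, d(u,s)$, where $\tilde h_j(u) := h_j(x) - h_n(x)$ (evaluated at the frozen values of $x_n,x_{2n}$) and $g_j$ is the appropriate power of $f_j$.

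The only conceptual point I expect to need attention, beyond the bookkeeping, is verifying that the restricted family $\{\tilde h_1, \ldots, \tilde h_{2n-2}\}$ itself satisfies the $L^{3/2}$-$L^3$ hypothesis needed to invoke the induction hypothesis in dimension $n-1$. This reduces to a clean cancellation: for each pair index $k' \in \{1,\ldots,n-1\}$ of the reduced group, with the natural reindexing,
\begin{displaymath}
\tilde h_{k'} - \tilde h_{(n-1)+k'} = \bigl[h_{k'}-h_n\bigr]-\bigl[h_{n+k'}-h_n\bigr] = h_{k'} - h_{n+k'},
\end{displaymath}
so the associated planar operator at frozen $\hat u_{k',(n-1)+k'}$ is precisely $T_{k',\hat x_{k',n+k'}}$ from the \emph{original} family, with $(x_n,x_{2n})$ viewed as two further frozen parameters. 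Consequently the uniform $L^{3/2}$-$L^3$ bound on $T_{k',\,\cdot\,}$ supplied by the hypothesis on $\{h_1,\ldots,h_{2n}\}$ is automatically inherited, with the same constant $C$. Once the induction hypothesis is invoked on the inner integral, a final H\"older over $(x_n, x_{2n})$ with exponents $(n,n,2n,\ldots,2n)$ -- exactly as in the last step of the proof of Theorem \ref{t:induction theorem} -- reassembles the right-hand side of \eqref{eq:vertex_eq_nonperp}.
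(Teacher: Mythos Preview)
Your proposal is correct and follows essentially the same approach as the paper's proof: the same base case via the planar operator $T$ from Definition~\ref{d:inductiveL32L3property}, and the same inductive Fubini--H\"older--Minkowski cascade peeling off the pair $(f_n,f_{2n})$ before invoking the $(n-1)$-dimensional statement. The only cosmetic difference is that you set $\tilde h_j = h_j - h_n$ and then verify the cancellation $\tilde h_{k'} - \tilde h_{(n-1)+k'} = h_{k'} - h_{n+k'}$, whereas the paper takes $\tilde h_{j,x_n,x_{2n}}$ to be the restriction of $h_j$ (respectively $h_{j+1}$) itself; since the $L^{3/2}$--$L^3$ property depends only on these differences, the two choices are equivalent.
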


The statement can be deduced by following the proof of Theorem
 \ref{t:induction theorem} almost verbatim. We decided to
give the argument for Theorem
 \ref{t:induction theorem}  first in Section \ref{s:Hn} since it is a bit easier to read
 and helps motivate the more general discussion in the present
 section. Below we merely explain how to adapt the proof of Theorem
 \ref{t:induction theorem} to establish Theorem  \ref{t:induction
 theorem_nonperp}.

\begin{proof} It suffices to verify the claim for
nonnegative, smooth, and compactly supported functions
$f_1,\ldots,f_{2n}$. The case $n=1$ follows directly from the
$L^{3/2}$-$L^3$ property of $\{h_1,h_2\}$ in Definition
\ref{d:inductiveL32L3property}, and a simple change-of-variables
argument, observing that
\begin{align*}
\int_{\mathbb{R}^3} f_1(\rho_1(p))f_2(\rho_2(p))&\,dp =
\int_{\mathbb{R}^3} f_1(x_2,t + h_1(x_1,x_2))
f_2(x_1,t+h_2(x_1,x_2))\,d(x_1,x_2,t)\\
&= \int_{\mathbb{R}^2}
f_2(x_1,\tau)\left(\int_{\mathbb{R}}f_1(x_2,\tau+h_1(x_1,x_2)-h_2(x_1,x_2))\,dx_2\right)\,d(x_1,\tau)\\
&= \int_{\mathbb{R}^2}
f_2(x_1,\tau)T_1f_1(x_1,\tau)\,d(x_1,\tau)\\&\leq \|T_1f_1\|_3
\|f_2\|_{\frac{3}{2}}\leq C \|f_1\|_{\frac{3}{2}}
\|f_2\|_{\frac{3}{2}},
\end{align*}
for nonnegative $f_1,f_2\in \mathcal{C}^{\infty}_c(\mathbb{R}^2)$.

Suppose next that the statement of Theorem \ref{t:induction
theorem_nonperp}  has already been established for all natural
numbers up to $n-1$. We will argue that it holds also for the
integer $n$. To this end, we fix an arbitrary family
$\{h_1,\ldots,h_{2n}\}$ of $\mathcal{C}^1$ functions
$\mathbb{R}^{2n}\to \mathbb{R}$ with the $L^{3/2}$-$L^3$ property.
Given nonnegative $\mathcal{C}^{\infty}_{c}$ functions
$f_1,\ldots,f_{2n}$, we aim to show the $n$ inequalities stated in
\eqref{eq:vertex_eq_nonperp}, and by symmetry it suffices to
discuss this for $k=1$. By the same argument as in the proof of
Theorem \ref{t:induction theorem}, but now using the
transformation $t\mapsto t+ h_{2n}(x) = \tau$, we find that
\begin{align}\label{eq:I_ast}
I:= &\int_{\mathbb{R}^{2n}} \int_{\mathbb{R}} \prod_{j=1}^{2n}
f_j(\rho_j(x,t))\,dt\,dx\\
= & \int_{\mathbb{R}^{2n}}  f_{2n}(\hat x_{2n}, \tau)\left[
\int_{\mathbb{R}}  f_n(\hat x_n,\tau +h_n(x)-h_{2n}(x))
\prod_{\substack{j=1\\j\neq
n,2n}}^{2n}f_j(\rho_j(x,\tau-h_{2n}(x)))\,dx_{2n}\right]\,d(\hat
x_{2n},t).\notag
\end{align}
Applying H\"older's inequality, we can split off the factor with
$f_{2n}$ (which no longer depends on $x_{2n}$) and we obtain
$I\leq \|f_{2n}\|_{2n+1}\, J$ with
\begin{align*}
J:=\left[\int_{\mathbb{R}^{2n}} \left(\int_{\mathbb{R}}f_n(\hat
x_n,\tau +h_n(x)-h_{2n}(x))\prod_{\substack{j=1\\j\neq
n,2n}}^{2n}f_j(\rho_j(x,\tau-h_{2n}(x)))\,d
x_{2n}\right)^{\frac{2n+1}{2n}} \,d(\hat
x_{2n},t)\right]^{\frac{2n}{2n+1}}.
\end{align*}
The remaining task is to show that
\begin{equation}\label{eq:JGoal_nonperp}
J \lesssim_{n,C}
\|f_1\|_{\frac{2n+1}{2}}\|f_{n+1}\|_{\frac{2n+1}{2}}\|f_n\|_{2n+1}\,
\prod_{j=2}^{n-1} \left(\|f_j\|_{2n+1}\|f_{n+j}\|_{2n+1}\right),
\end{equation}
and this is done as in the proof of Theorem \ref{t:induction
theorem}, but using the transformation $\tau \mapsto t= \tau+
h_n(x)-h_{2n}(x)$. Then, as in the proof of Theorem
\ref{t:induction theorem}, we find that in order to prove
\eqref{eq:JGoal_nonperp}, it suffices to show that
\begin{equation}\label{eq:JProdGoal_nonperp}
J_{\Pi}\lesssim_{n,C}
\|f_1\|_{\frac{2n+1}{2}}\|f_{n+1}\|_{\frac{2n+1}{2}}
\prod_{j=2}^{n-1}\left( \|f_j\|_{2n+1}\,
\|f_{n+j}\|_{2n+1}\right),
\end{equation}
where
\begin{align*}
J_{\Pi}:=  \left( \int_{\mathbb{R}}
 \left[ \int_{\mathbb{R}^{2n-1}} \left(\int_{\mathbb{R}}
\prod_{\substack{j=1\\j\neq
n,2n}}^{2n}f_j(\rho_j(x,t-h_n(x)))^{\frac{2n+1}{2n}}
\,dx_n\right)^{\frac{2n}{2n-1}} \,d(\hat x_{n,2n},t)
\right]^{\frac{2n-1}{2n}}\,dx_{2n}\right)^{\frac{2n}{2n+1}}.
\end{align*}
Applying Minkowski's integral inequality inside the square
brackets, then Fubini's theorem and the transformation $t\mapsto
\tau=t - h_n(x)$ yields
\begin{align}\label{eq:J_Pi_Int_nonperp}
J_{\Pi} &\leq \left(\int_{\mathbb{R}^2}
\left[\int_{\mathbb{R}^{2n-1}}\prod_{\substack{j=1\\j\neq
n,2n}}^{2n}f_j(\rho_j(x,\tau))^{\frac{2n+1}{2n-1}}\,d(\hat
x_{n,2n},\tau)\right]^{\frac{2n-1}{2n}}\,
d(x_n,x_{2n})\right)^{\frac{2n}{2n+1}}.
\end{align}
We recall that
\begin{equation}\label{eq:ProjFjForm_nonperp}
f_j(\rho_j(x,\tau)) = f_j(\hat x_j, \tau + h_j(x)).
\end{equation}
We will continue the upper bound for $J_{\Pi}$ by applying the
induction hypothesis to the expression inside the square brackets.
To do so, we temporarily denote points in $\mathbb{H}^{n-1}$ in
coordinates by $(u,t)=(u_1,\ldots,u_{2n-2},\tau)$. Here, $u$ is a
point in $\mathbb{R}^{2n-2}$, and similarly as before, $\hat u_k$
denotes the point in $\mathbb{R}^{2n-3}$ that is obtained from $u$
by deleting the $k$-th coordinate. With this notation in place,
and recalling \eqref{eq:ProjFjForm_nonperp}, we can restate
\eqref{eq:J_Pi_Int_nonperp} equivalently as follows
\begin{align*}
J_{\Pi} &\leq \left(\int_{\mathbb{R}^2}
\left[\int_{\mathbb{R}^{2n-1}}\prod_{j=1}^{2n-2}g_{x_n,x_{2n},j}(\widetilde{\rho}_{j,x_n,x_{2n}}(u,t))\,d(u,t)\right]^{\frac{2n-1}{2n}}\,
d(x_n,x_{2n})\right)^{\frac{2n}{2n+1}},
\end{align*}
where $g_{x_n,x_{2n},j}(\hat u_j,t)$ are defined exactly as in
\eqref{eq:def_gj}-\eqref{eq:def_gjn}
and
\begin{displaymath}
\widetilde{\rho}_{j,x_n,x_{2n}}(u,t)=\left\{\begin{array}{ll}\left(\hat
u_j, t +
h_j(u_1,\ldots,u_{n-1},x_n,u_{n},\ldots,u_{2n-2},x_{2n})\right),&1\leq
j\leq n-1,\\ \left(\hat u_j, t +
h_{j+1}(u_1,\ldots,u_{n-1},x_n,u_{n},\ldots
u_{2n-2},x_{2n})\right),&n\leq j\leq 2n-2.\end{array}\right.
\end{displaymath}
Thus, the functions $\widetilde{\rho}_{j,x_n,x_{2n}}$ are as in
the statement of Theorem \ref{t:induction theorem_nonperp} for
$n-1$, with
\begin{displaymath}
\widetilde{h}_{j,x_n,x_{2n}}(u):= \left\{\begin{array}{ll}
h_j(u_1,\ldots,u_{n-1},x_n,u_{n},\ldots,u_{2n-2},x_{2n}),&
1\leq j\leq n-1,\\
h_{j+1}(u_1,\ldots,u_{n-1},x_n,u_{n},\ldots,u_{2n-2},x_{2n}),&n\leq
j\leq 2n-2.\end{array}\right.
\end{displaymath}
 In particular, if $\{h_1,\ldots,h_{2n}\}$ has the
$L^{3/2}$-$L^3$
 property with constant $C$ as assumed, then so does
$\{\widetilde{h}_{1,x_n,x_{2n}},\ldots,\widetilde{h}_{2n-2,x_n,x_{2n}}\}$
for every $(x_n,x_{2n})\in \mathbb{R}^2$. The induction hypothesis
applied to the inner integral therefore yields
\begin{align}\label{eq:J_Pi_BeforeHol_nonperp}
J_{\Pi} &\lesssim_C \left(\int_{\mathbb{R}^2}
\left[\|g_{x_n,x_{2n},1}\|_{{\frac{2n-1}{2}}}\|g_{x_n,x_{2n},n}\|_{{\frac{2n-1}{2}}}\prod_{\substack{j=1\\j\notin{1,n}}}^{2n-2}
\|g_{x_n,x_{2n},j}\|_{{2n-1}}\right]^{\frac{2n-1}{2n}}\,
d(x_n,x_{2n})\right)^{\frac{2n}{2n+1}}.
\end{align}
At the point, the proof can be concluded as in the case of Theorem
\ref{t:induction theorem}, recalling that the functions
$g_{x_n,x_{2n},j}$ have been defined exactly as in
\eqref{eq:def_gj}-\eqref{eq:def_gjn}.
\end{proof}

As in the case of the Heisenberg vertical coordinate projections,
we can use multilinear interpolation to deduce a Loomis-Whitney
type inequality for generalized projections
$\{\rho_1,\ldots,\rho_{2n}\}$.

\begin{thm}\label{mainIntroStrong_nonperp}
 Fix $n\in \mathbb{N}$, $n>1$. Given a family $\{h_1,\ldots,h_{2n}\}$ of
 $\mathcal{C}^1$ functions on $\mathbb{R}^{2n}$ that has the
 $L^{3/2}$-$L^3$ property with constant $C$, we define
\begin{displaymath}
\rho_j:\mathbb{R}^{2n+1}\to\mathbb{R}^{2n},\quad
\rho_j(x,t)=\left(\hat x_j, t + h_j(x)\right),\quad j=1,\ldots,2n.
\end{displaymath}Then
\begin{equation}\label{form18Strong_nonperp} \int_{\mathbb{R}^{2n+1}} \prod_{j=1}^{2n} f_j(\rho_j(p))\,dp \lesssim \prod_{j=1}^{2n}
\|f_j\|_{\frac{n(2n+1)}{n+1}},
\end{equation}
for all nonnegative Lebesgue measurable functions
$f_1,\ldots,f_{2n}$ on $\mathbb{R}^{2n}$, where the implicit
constant may depend on $n$  and $C$.
\end{thm}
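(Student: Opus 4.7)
The plan is to repeat the argument by which Theorem \ref{mainIntroStrong} was deduced from Theorem \ref{t:induction theorem} in Section \ref{s:Hn}, but with the Heisenberg projections $\pi_j$ replaced by the maps $\rho_j$ and with Theorem \ref{t:induction theorem_nonperp} now playing the role of Theorem \ref{t:induction theorem}. By the usual approximation argument (truncation plus Fatou's lemma), it suffices to verify \eqref{form18Strong_nonperp} for nonnegative finitely simple functions, so that the multilinear interpolation theorem \ref{t:Grafakos} is directly applicable.

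First I would rewrite the left-hand side of \eqref{form18Strong_nonperp} as the pairing of a $(2n-1)$-linear operator with $f_{2n}$. Applying the Fubini--Tonelli theorem and the substitution $\tau = t + h_{2n}(x)$ in the $t$-variable makes $f_{2n}$ independent of $x_{2n}$, and yields
\[
\int_{\mathbb{R}^{2n+1}} \prod_{j=1}^{2n} f_j(\rho_j(p))\,dp = \int_{\mathbb{R}^{2n}} T(f_1,\ldots,f_{2n-1})(\hat x_{2n},\tau)\, f_{2n}(\hat x_{2n},\tau)\, d(\hat x_{2n},\tau),
\]
where
\[
T(g_1,\ldots,g_{2n-1})(\hat x_{2n},\tau) := \int_{\mathbb{R}} g_n(\hat x_n,\tau + h_n(x) - h_{2n}(x)) \prod_{\substack{j=1\\j\neq n}}^{2n-1} g_j(\rho_j(x,\tau - h_{2n}(x)))\, dx_{2n}.
\]
H\"older's inequality with dual exponents $n(2n+1)/(n+1)$ and $q := n(2n+1)/(2n^2-1)$ then reduces the theorem to the operator bound
\[
\|T(f_1,\ldots,f_{2n-1})\|_q \lesssim \prod_{j=1}^{2n-1} \|f_j\|_{p_j}, \qquad p_j = \frac{n(2n+1)}{n+1},
\]
for nonnegative finitely simple $f_j$.

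Second, Theorem \ref{t:induction theorem_nonperp}, paired by duality against an arbitrary nonnegative measurable $f_{2n}$, produces a family of $n$ operator bounds
\[
\|T(f_1,\ldots,f_{2n-1})\|_{q_k} \lesssim \prod_{j=1}^{2n-1} \|f_j\|_{p_{j,k}}, \qquad k=1,\ldots,n,
\]
with exactly the same exponents $q_k$ and $p_{j,k}$ that appeared in the analogous step of the proof of Theorem \ref{mainIntroStrong}. The convex-combination identities \eqref{eq:InterpolationQExponents} that relate these exponents to the target exponents $q$ and $p_j$ are purely numerical and were verified there, so they remain valid verbatim in the present generality.

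Finally, I would invoke Theorem \ref{t:Grafakos} iteratively, applying it $(n-1)$ times with $\theta = 1/2, 1/3, \ldots, 1/n$ in sequence as in the proof of Theorem \ref{mainIntroStrong}; this yields the desired operator bound on finitely simple inputs, and the approximation argument mentioned above extends the conclusion to arbitrary nonnegative measurable functions. I do not expect any serious obstacle, since all the heavy lifting -- both the $n=1$ base case (which is built into the $L^{3/2}$-$L^3$ property of $\{h_1,\ldots,h_{2n}\}$) and the inductive strengthening -- is already contained in Theorem \ref{t:induction theorem_nonperp}; the only point to double-check is that the implicit constant in the interpolation step depends only on $n$ and $C$, which is transparent from Theorem \ref{t:Grafakos} once one tracks through the $(n-1)$ iterations.
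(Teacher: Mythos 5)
Your proposal is correct and follows the same route as the paper's proof: rewriting the left-hand side as a pairing of the $(2n-1)$-linear operator $T$ (with the same change of variables $\tau = t + h_{2n}(x)$) with $f_{2n}$, invoking Theorem \ref{t:induction theorem_nonperp} by duality to get the family of endpoint operator bounds, and then applying Theorem \ref{t:Grafakos} iteratively exactly as in the proof of Theorem \ref{mainIntroStrong}. Since the Lebesgue exponents in \eqref{eq:vertex_eq_nonperp} coincide with those in \eqref{eq:vertex_eq}, the numerical convex-combination identities transfer verbatim, just as you say.
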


\begin{remark}\label{r:general_rhoj}
A straightforward generalization of Theorem
\ref{mainIntroStrong_nonperp} can be obtained for the family
$\{\Phi_j \circ \rho_j :\; j=1,\ldots,2n\}$, where
$\Phi_j:\mathbb{R}^{2n}\to \mathbb{R}^{2n}$ are $\mathcal{C}^1$
diffeomorphisms with $\Lambda:= \min_{j=1,\ldots,2n}|\det
D\Phi_j|>0$ and $\rho_j$ are as in Theorem
\ref{mainIntroStrong_nonperp}. Indeed, simply apply Theorem
\ref{mainIntroStrong_nonperp} to the functions $g_j:= f_j \circ
\Phi_j$, $j=1,\ldots,2n$, and then perform changes-of-variables in
the integrals in $\|g_j\|_{\frac{n(2n+1)}{n+1}}$ to deduce that
\begin{displaymath} \int_{\mathbb{R}^{2n+1}} \prod_{j=1}^{2n} f_j(\Phi_j \circ \rho_j(p))\,dp \lesssim_{n,C,\Lambda} \prod_{j=1}^{2n}
\|f_j\|_{\frac{n(2n+1)}{n+1}}
\end{displaymath}
for all nonnegative Lebesgue measurable functions
$f_1,\ldots,f_{2n}$ on $\mathbb{R}^{2n}$.
\end{remark}

\begin{proof}[Proof of Theorem \ref{mainIntroStrong_nonperp} using Theorem \ref{t:induction theorem_nonperp}]
By the comment made at the beginning of the proof of Theorem
\ref{t:induction theorem_nonperp}, we already know the case $n=1$
of Theorem \ref{mainIntroStrong_nonperp}. Suppose that the
statement of Theorem \ref{t:induction theorem_nonperp} holds for a
given integer $n>1$. Fix mappings $h_j$ and $\rho_j$,
$j=1,\ldots,2n$, as in the statement of Theorems \ref{t:induction
theorem_nonperp} and \ref{mainIntroStrong_nonperp}.
 Our aim is
to verify \eqref{form18Strong_nonperp} for all nonnegative
measurable functions $f_1,\ldots,f_{2n}$ on $\mathbb{R}^{2n}$. The
desired inequality can be spelled out as follows:
\begin{equation}\label{eq:DesiredIneqExplicit_nonperp}
\int_{\mathbb{R}^{2n+1}} \prod_{j=1}^{2n} f_j(\hat
x_j,t+h_j(x))\;d(x,t) \lesssim \prod_{j=1}^{2n}
\|f_j\|_{\frac{n(2n+1)}{n+1}}.
\end{equation}
Similarly as in the proof of Theorem \ref{mainIntroStrong}, we
introduce a suitable multilinear operator $T$. Namely, for all
finitely simple functions $g_1,\ldots,g_{2n-1}$ on
$\mathbb{R}^{2n}$, we define
\begin{align*}\label{def:T_nonperp}
T (g_1,\ldots,g_{2n-1})(\hat x_{2n},\tau)
:=\int_{\mathbb{R}}g_n(\hat x_n,\tau+h_n(x)-h_{2n}(x))
\prod_{\substack{j=1\\j\neq n}}^{2n-1} g_j(\rho_j(x,
\tau-h_{2n}(x)))\mathrm{d}x_{2n}.\notag
\end{align*}
Hence, by the same computation that led to \eqref{eq:I_ast}, we
find for all finitely simple functions $f_1,\ldots,f_{2n-1}$ and
nonnegative measurable function $f_{2n}$ that
\begin{align*}
\int_{\mathbb{R}^{2n+1}} \prod_{j=1}^{2n} f_j(\rho_j(p))\;dp =
\int_{\mathbb{R}^{2n}} T(f_1,\ldots,f_{2n-1})(w) f_{2n}(w)\; dw.
\end{align*}
From this point on, the argument is entirely abstract and does no
longer use the specific form of the operator $T$. Analogously as
in the proof of Theorem \ref{mainIntroStrong}, the inequalities we
obtained in Theorem \ref{t:induction theorem_nonperp} yield bounds
of the form \eqref{eq:OpAssk_inproof} for the operator $T$. These
bounds can be combined using multilinear interpolation, as in
Theorem \ref{t:Grafakos}, to yield a bound of the form
\eqref{eq:GoalOperatorBound_inproof} for the operator $T$, which
eventually gives \eqref{eq:DesiredIneqExplicit_nonperp}.
\end{proof}

\bibliographystyle{plain}
\bibliography{references}

\end{document}